\documentclass[12pt,a4paper]{article}
\usepackage{amsmath,amssymb,amsthm}
\usepackage{graphicx}
\usepackage{amsfonts,graphicx, amsthm,amsmath,amssymb}
\usepackage{hyperref}
\newtheorem{theorem}{Theorem}
\newtheorem{lemma}{Lemma}
\newtheorem{proposition}{Proposition}

\theoremstyle{definition}
\newtheorem*{remark}{Remark}
\newtheorem{definition}{Definition}

\newcommand{\eps}{\varepsilon}
\DeclareMathOperator*{\mes}{mes}
\DeclareMathOperator*{\osc}{osc}

\newcommand{\abs}[1]{\lvert#1\rvert}
\newcommand{\norm}[1]{\lVert#1\rVert}
\newcommand{\calI}{{\mathcal{I}}}

\begin{document}
\date{}
\title{Asymptotic properties of Arnold tongues and Josephson effect}
\author{
A.~Klimenko
\thanks{Steklov Mathematical Institute of RAS, Email: klimenko05@mail.ru} , O.~Romaskevich
\thanks{National Research University Higher School of Economics, \'{E}cole Normale Superieure de Lyon, Email: olga.romaskevich@gmail.com}
\thanks{Supported by part by RFBR grants
12-01-31241-mol-a and 12-01-33020-mol-a-ved}}
\maketitle
\begin{flushright}
\emph{To our dear teacher Yu.S. Ilyashenko on his 70-th birthday}
\end{flushright}
\begin{abstract}
A three-parametrical family of ODEs on a torus arises from a model of Josephson effect in a resistive case when a Josephson junction is biased by a sinusoidal microwave current. We study asymptotics of Arnold tongues of this family on the parametric plane (the third parameter is fixed) and prove
that the boundaries of the tongues are asymptotically close to Bessel functions.
\end{abstract}

\section{Introduction}
\label{sec:intro}
We will deal with a family of differential equations on a circle $\mathbb R/2\pi\mathbb Z$
\begin{equation}\label{eq:Joseph}
\frac{dx}{dt}=\frac{\cos x +a + b \cos t}{\mu},
\end{equation}
which arises in physics of the Josephson effect\footnote{In physical literature this equation is often written with sines instead of cosines. Substitutions $x\to x\pm\pi/2$, $t\to t\pm\pi/2$ transform one variant to another; one can see that these substitutions do not affect all results of this paper.}. In the paper we refer to \eqref{eq:Joseph} as to \emph{the Josephson equation}.

Here $a,b\in\mathbb R$, and $\mu>0$ are parameters. Such a family was studied in the context of Prytz planimeter \cite{Foote} as well as in the context of bicycle track trajectories \cite{Finn, LT}. For the first time the techniques of slow-fast systems (for $\mu\ll 1$) were applied to this equation by J.~Guckenheimer and Yu.~Ilyashenko in \cite{IG} but in the context of Josephson effect the family \eqref{eq:Joseph} has not been studied from a mathematical point of view till the series of works \cite{BKTcon, buch2008} by V.M. Buchstaber, O.V. Karpov and S.I. Tertychnyi. Now this subject has become quite popular, see for instance \cite{Buch, buch2012, IRF, KRS, oneline, BT}.

The family \eqref{eq:Joseph} can be generalized to the following form:
\begin{equation}\label{eq:general}
\frac{dx}{dt}=\frac{f(x)+a+bg(t)}\mu,
\end{equation}
where $f$ and $g$ are $2\pi$-periodic functions with zero averages:
\begin{equation}\label{eq:fg-averages}
\int_0^{2\pi}f(x)\,dx=0,\qquad \int_0^{2\pi}g(t)\,dt=0.
\end{equation}
Any equation of the form \eqref{eq:general} defines a vector field on a two-dimensional torus $\mathbb R^2/2\pi\mathbb Z^2$ with coordinates $x$ and $t$. Namely, introducing a new time variable $\tau$, we can express this vector field as
\begin{equation}\label{eq:Joseph_torus}
\left\{
\begin{aligned}
\dfrac{\partial x}{\partial \tau}&{}=f(x)+a+bg(t),\\
\dfrac{\partial t}{\partial \tau}&{}=\mu.
\end{aligned}
\right.
\end{equation}
The same vector field can also be considered as a vector field on a cylinder $\mathbb R^2/((x,t)\sim (x,t+2\pi))$. In both cases the  Poincar\'e map from the transversal line $\{t=0 \bmod 2\pi\}$ to itself can be defined, we denote it as $P_{a, b, \mu}$ for the torus, and $\widetilde P_{a, b, \mu}$ for the cylinder.
Clearly, $\widetilde P_{a, b, \mu}$ is a lift of $P_{a, b, \mu}$.

Consider the \emph{rotation number} $\rho_{a,b,\mu}$ of the map $P_{a,b, \mu}$ which is, by definition, a limit
\begin{equation*}
\rho_{a,b,\mu}:=\lim_{n \rightarrow \infty} \frac{\widetilde{P}_{a,b,\mu}^{\circ n}(x)-x}{2 \pi n}.
\end{equation*}
It is well known that this limit exists and does not depend on the point $x \in \mathbb{R}$ (see, for example, \cite{KH}). The value of the rotation number is an important characteristic of a map $P_{a,b, \mu}$: for instance, it is invariant under conjugation by homeomorphisms.

\begin{definition}
We say that the \emph{phase lock} occurs for the value $k \in \mathbb{R}$ of rotation number if the level set
$$E_k:=\left\{(a,b,\mu)| \rho_{a,b,\mu}=k\right\}$$
 in the space of parameters  $\mathbb{R}^2 \times \mathbb{R}_{+}$ has nonempty interior.
In this case the level set $E_k$ is called an \emph{Arnold tongue}.
\end{definition}

The structure of Arnold tongues for the equation \eqref{eq:Joseph} and its generalizations is of a great interest for physical applications as well as from a purely mathematical point of view. We study sections of Arnold tongues by the planes with fixed $\mu$. Nevertheless, we still take $\mu$ into account, particularly, constants in the $O(\,\cdot\,)$'s do not depend on $\mu$.

Since the right-hand side of the equation \eqref{eq:general} (and thus the map $\widetilde P_{a, b, \mu}$) grows monotonically with~$a$ there is no phase lock for $k \notin \mathbb{Q}$. This happens generically to Arnold tongues: they are absent for irrational values of rotation number. Moreover, the specificity of the equation \eqref{eq:Joseph} gives that for $k\in\mathbb Q\setminus\mathbb Z$ there is no phase lock as well.

It's easy to see that the substitution $u=\tan \frac{x}{2}$ conjugates the equation \eqref{eq:Joseph} to a Riccati equation. This fact was noticed by R.Foote in \cite{Foote} in the context of Prytz planimeter then rediscovered independently by Yu.Ilyashenko \cite{lectures, IRF} and V.Buchstaber, O.Karpov, S.Tertychnyj \cite{Buch} in the context of Josephson effect. This simple but important remark gives that the Poincar\'{e} map $P_{a,b, \mu}$ is conjugated to a M\"obius transformation. Lots of uncommon properties of Josephson equation follow from this fact, the absence of phase lock for non-integer rotation numbers as one of the examples.

Indeed, if $\rho_{a,b,\mu}=p/q$, $q>1$, then the Poincar\'e map has a periodic point of period $q$. But a M\"obius transformation with periodic non-fixed points should be periodic itself. Therefore, $(\widetilde P_{a,b,\mu})^q(x)=x+p$. Monotonicity in $a$ yields that this identity can appear for only one value of $a$ provided $b$ and $\mu$ are fixed, hence the level set has empty interior.

So for a fixed $\mu$ there is a countable number of tongues on the plane of parameters $(a,b)$, corresponding to integer rotation numbers. From now on we will consider the half-plane $b>0$; another half-plane could be studied using symmetries of the equation.

The previous argument uses only the fact that $f(x)=\cos x$, imposing no conditions on $g(t)$. But when $g$ is even (in particular, when $g(t)=\cos t$) the equation \eqref{eq:Joseph_torus} possesses an additional symmetry: the map $(x,t) \mapsto (-x,-t)$ brings phase curves to themselves with orientation reversed. This means that $-P_{a,b,\mu}(-x)=P^{-1}_{a,b,\mu}(x)$. Hence, if $x_0$ is fixed point of $P_{a,b,\mu}$, then $-x_0$ is also a fixed point. If the point $(a,b,\mu)$ lies on the boundary of an Arnold tongue then the M\"obius map $\widetilde P_{a,b,\mu}$ is either parabolic or identity. In the parabolic case its only fixed point $\hat x$ should satisfy $\hat x\equiv -\hat x\pmod{2\pi}$, hence $\hat x$ is either $0$ or $\pi$.

For any fixed $b$ and $\mu$ the set $E_k^{b,\mu}=\{a \in \mathbb{R}: (a,b,\mu) \in E_k\}$ is a closed interval $E_k^{b,\mu}=[a^-_{b,\mu},a^+_{b,\mu}]$. When $a$ varies from left end $a^-_{b,\mu}$ of the interval $E_k^{b,\mu}$ to its right end $a^+_{b,\mu}$,
the set $\{x:\widetilde P_{a,b,\mu}(x)>x+k\}$ grows monotonically since the right-hand side of the equation \eqref{eq:general} is monotonic in $a$.
Thus, if $\hat x$ is a fixed point of $P_{a,b,\mu}$ for $a=a^-_{b,\mu}$, then $\widetilde P_{a,b,\mu}(\hat x)>\hat x+k$ for all $a\in E_k^{b,\mu}$, except $a=a^-_{b,\mu}$, and $\hat x$ can not be a fixed point of $P_{a,b,\mu}$ for $a=a^+_{b,\mu}$.
Hence $P_{a,b,\mu}$ has a fixed point $0$ at one end of the segment $E_k^{b,\mu}$ and $\pi$ on its other end.

Therefore, for a fixed $\mu$, boundary of the Arnold tongue with rotation number equal to $k\in\mathbb Z$
can be presented as a union of two graphs of analytic functions denoted by $a_{0,k}(b)$ and $a_{\pi,k}(b)$, where
$0$ (respectively, $\pi$) is fixed by Poincar\'e map when $a=a_{0,k}(b)$ (respectively, $a=a_{\pi,k}(b)$).
These graphs can intersect, and the Poincar\'e map $\tilde{P}_{a,b,\mu}$ is identical at the intersection points.

\section{Main results}

We are interested in the asymptotics of the boundaries $a_{0,k}(b)$ and $a_{\pi,k}(b)$ of Arnold tongues for \eqref{eq:Joseph} as $b\to\infty$.
These estimates will be established in two steps. First, in Theorem \ref{thm:asymp-const} we show that the boundaries $a_{0,k}(b)$ and $a_{\pi,k}(b)$ are close to the line $a=k \mu$. Thereupon we show in Theorem \ref{thm:asymp-bessel} that the functions
$a_{0,k}(b)-k\mu$ and $a_{\pi,k}(b)-k\mu$ are asymptotically close to normalized integer Bessel functions. This fact was noticed for the first time in \cite{HollyJ}, right after the discovery of the Josephson effect in 1962 with the first explanation on a physical level of rigor; see also chapter~5 in \cite{LiUl}, \S11.1 in \cite{BaPa}, and \cite{Buch}. In this paper we give a complete proof of this statement, as well as the estimates on the difference.

\begin{theorem}\label{thm:asymp-const}There exist positive constants $C_1,C_2,K_1,K_2$ such that the following holds.

If the parameters $a,b,\mu$ are such that
\begin{equation}\label{eq:thm-const-cond}
|a|+1\le C_1\sqrt{b\mu},\qquad
b\ge C_2 \mu
\end{equation}
then
\begin{equation}\label{eq:asymp-const}
\left|\frac a\mu-\rho_{a,b,\mu}\right|\le \frac{K_1}{\sqrt{b\mu}}+\frac{K_2}{b\mu}\ln\biggl(\frac{b}{\mu}\biggr)\le
\frac{K_1}{\sqrt{b\mu}}+\frac{2K_2}{\sqrt{b\mu^3}}.
\end{equation}
\end{theorem}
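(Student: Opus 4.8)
The plan is to reduce the rotation number to a time average of $\cos x$ along a trajectory, and then bound that average by exploiting the fast oscillation forced by the large term $b\cos t$.

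First I would integrate \eqref{eq:Joseph} over whole periods of $t$. Since $\widetilde P_{a,b,\mu}^{\circ n}(x_0)-x_0=\int_0^{2\pi n}\dot x\,dt$ and $\int_0^{2\pi n}\cos t\,dt=0$, the contribution of $b\cos t$ drops out and
\begin{equation*}
\widetilde P_{a,b,\mu}^{\circ n}(x_0)-x_0=\frac{2\pi n a}{\mu}+\frac1\mu\int_0^{2\pi n}\cos x(t)\,dt .
\end{equation*}
Dividing by $2\pi n$ and letting $n\to\infty$ (the limit exists and is $x_0$-independent, as noted above) gives
\begin{equation*}
\rho_{a,b,\mu}-\frac a\mu=\frac1\mu\lim_{n\to\infty}\frac{1}{2\pi n}\int_0^{2\pi n}\cos x(t)\,dt .
\end{equation*}
Writing $I_j=\int_{2\pi j}^{2\pi(j+1)}\cos x\,dt$, we have $\bigl|\frac1{2\pi n}\sum_{j=0}^{n-1}I_j\bigr|\le\frac1{2\pi}\max_j|I_j|$, so it suffices to prove a \emph{uniform} single-period bound: for every solution and every $t_0$, with $I=\int_{t_0}^{t_0+2\pi}\cos x\,dt$,
\begin{equation*}
|I|\le C\Bigl(\sqrt{\mu/b}+\tfrac1b\ln(b/\mu)\Bigr).
\end{equation*}

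Next I would set up the oscillatory structure. Put $\omega=b/\mu$, large by $b\ge C_2\mu$. The phase $x(t)$ has turning points where $\dot x=0$, i.e.\ near $t\equiv\pi/2,\,3\pi/2\pmod{2\pi}$, inside windows of width $O((|a|+1)/b)$; there $|\ddot x|\approx b/\mu=\omega$. Meanwhile the drift $\dot y=(a+\cos x)/\mu$ obeys $|\dot y|\le(|a|+1)/\mu\le C_1\sqrt{b/\mu}=C_1\sqrt\omega$ by \eqref{eq:thm-const-cond}. On a stationary window of natural width $r\sim\omega^{-1/2}=\sqrt{\mu/b}$ the drift perturbs the phase only by $\dot y\cdot r=O(C_1)$, so taking $C_1$ small keeps this $O(1)$ and makes the stationary-phase estimate clean; this is precisely why $|a|+1\lesssim\sqrt{b\mu}$ is the correct scaling, and why the turning windows sit comfortably inside the stationary windows.

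Then I would split the period into the two stationary windows $|t-\pi/2|\le r$, $|t-3\pi/2|\le r$ and the complementary non-stationary region. On each window, a van der Corput / stationary-phase bound using $|\ddot x|\approx\omega$ yields a contribution $O(\omega^{-1/2})=O(\sqrt{\mu/b})$. On the non-stationary region, where $|\dot x|$ is bounded below, I integrate by parts through $\cos x=(\sin x)'/\dot x$:
\begin{equation*}
\int\cos x\,dt=\Bigl[\tfrac{\sin x}{\dot x}\Bigr]+\int\sin x\,\frac{\ddot x}{\dot x^{2}}\,dt,\qquad \ddot x=\frac{-\sin x\,\dot x-b\sin t}{\mu}.
\end{equation*}
The boundary terms are $O(\mu/(br))=O(\sqrt{\mu/b})$, and the remainder splits as $-\frac1\mu\int\frac{\sin^2x}{\dot x}\,dt-\frac{b}{\mu}\int\frac{\sin x\sin t}{\dot x^{2}}\,dt$. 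Using $\dot x\approx(b\cos t)/\mu$ away from the turning points, the second piece is again $O(\sqrt{\mu/b})$, whereas the first is controlled by $\frac1b\int\frac{dt}{|\cos t|}$, a logarithmically divergent integral cut off at the window scale $r$, producing the term $O(b^{-1}\ln(b/\mu))$.

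Finally, collecting the pieces gives $|I|\le C(\sqrt{\mu/b}+b^{-1}\ln(b/\mu))$; dividing by $\mu$ and averaging over periods yields the first inequality of \eqref{eq:asymp-const} with suitable $K_1,K_2$. The second inequality is the elementary estimate $\ln(b/\mu)\le 2\sqrt{b/\mu}$ (from $\ln z\le2\sqrt z$, applicable since $b/\mu\ge C_2$), which turns $\frac{K_2}{b\mu}\ln(b/\mu)$ into $\frac{2K_2}{\sqrt{b\mu^3}}$. I expect the genuine difficulty to lie in the stationary-phase estimate near the turning points in the presence of the non-negligible drift $\dot y$: making the van der Corput bound uniform there is what forces the smallness of $C_1$ and the precise window size $r$, and it is the same mechanism (the cutoff of $\int dt/|\cos t|$ at $r$) that generates the logarithmic correction.
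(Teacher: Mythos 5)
Your argument is correct in outline and all the scalings check out, but the core mechanism is genuinely different from the paper's. The paper reduces, exactly as you do, to a uniform bound $\bigl|\int_0^{2\pi}\cos x(t)\,dt\bigr|=O\bigl(\sqrt{\mu/b}+b^{-1}\ln(b/\mu)\bigr)$ (your telescoping over $n$ periods and the paper's ``a map uniformly close to a rigid rotation has close rotation number'' are interchangeable here). The difference is in how the oscillation is exploited. The paper partitions $[0,2\pi]$ by the times at which $x(t)\equiv x(0)\pmod{2\pi}$, i.e.\ by full revolutions of $x$, and on each revolution not meeting $M_\delta=\{|\cos t|<\delta\}$ applies its Lemma~1: the time average of $\cos x$ along a monotone arc differs from the space average (which vanishes over a full revolution) by $\osc(\dot x)/|\dot x|_{\min}$. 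You instead partition by the zeros of $\cos t$ and integrate by parts via $\cos x=(\sin x)'/\dot x$ on the non-stationary region; your two remainder integrands $\frac{1}{\mu|\dot x|}$ and $\frac{b}{\mu\dot x^2}$ reproduce exactly the paper's densities $\frac{1}{b|\cos t|}$ and $\frac{\mu}{b\cos^2 t}$, and the cutoff at $|\cos t|\sim\sqrt{\mu/b}$ produces the same $\sqrt{\mu/b}$ and $b^{-1}\ln(b/\mu)$ terms. Two small remarks. First, on the stationary windows of width $r\sim\sqrt{\mu/b}$ no van der Corput estimate is needed or even helpful: the trivial bound $|\cos x|\le1$ already gives $O(r)=O(\sqrt{\mu/b})$, so the ``genuine difficulty'' you anticipate near the turning points is absent (the paper likewise just bounds the integrand by $1$ on $M_\delta$ and on the at most five straddling revolutions, whose lengths it controls by a separate bootstrapping estimate, its Proposition~1). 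Second, what the paper's route buys is generality: Lemma~1 needs only that $f$ is bounded, integrable and of zero average, which is what makes Theorem~\ref{thm:asymp-const-gen} in Section~\ref{sec:general} immediate, whereas your integration by parts uses the explicit antiderivative of $\cos$ and differentiability of $f$. What your route buys is a self-contained classical oscillatory-integral argument that avoids tracking the revolutions of $x$ and the bookkeeping of type~1/2/3 segments. With the constants $C_1$ small and $C_2$ large fixed as you indicate (so that $|\dot x|\ge b|\cos t|/(2\mu)$ off the windows and the boundary terms are $O(\sqrt{\mu/b})$), your proof goes through.
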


\begin{theorem}\label{thm:asymp-bessel}There exist positive constants $C_1',C_2',K_1',K_2',K_3'$ such that the following holds.

For the parameters $b,\mu$ and a number $k \in \mathbb{Z}$ satisfying inequalities
\begin{equation}\label{eq:thm-bessel-cond}
|k\mu|+1\le C_1'\sqrt{b\mu},\qquad
b\ge C_2' \mu
\end{equation}
the following estimates hold
\begin{equation}\label{eq:asymp-bessel}
\begin{aligned}
\left|\frac {a_{0,k}(b)}{\mu}-k+\frac{1}{\mu}J_k\left(-\frac{b}{\mu}\right)\right|&{}\le \frac{1}{b}\biggl(K_1'+\frac{K_2'}{\mu^3}+K_3'\ln\biggl(\frac{b}{\mu}\biggr)\biggr),\\
\left|\frac {a_{\pi,k}(b)}{\mu}-k-\frac{1}{\mu}J_k\left(-\frac{b}{\mu}\right)\right|&{}\le \frac{1}{b}\biggl(K_1'+\frac{K_2'}{\mu^3}+K_3'\ln\biggl(\frac{b}{\mu}\biggr)\biggr).
\end{aligned}
\end{equation}
\end{theorem}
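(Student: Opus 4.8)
The plan is to reduce the whole theorem to a single oscillatory-integral estimate by means of an exact identity obtained from integrating the equation over one period. At $a=a_{0,k}(b)$ the point $0$ is fixed by the Poincar\'e map and the rotation number is $k$, so the solution $x_0(t)$ with $x_0(0)=0$ satisfies $x_0(2\pi)=2\pi k$. Integrating \eqref{eq:Joseph} over $[0,2\pi]$ and using $\int_0^{2\pi}\cos t\,dt=0$ gives the exact formula
\[
\frac{a_{0,k}(b)}{\mu}=k-\frac{1}{2\pi\mu}\int_0^{2\pi}\cos x_0(t)\,dt,
\]
and the analogous identity for $a_{\pi,k}(b)$ with $x_\pi(0)=\pi$, $x_\pi(2\pi)=2\pi k+\pi$. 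Thus the theorem is equivalent to showing that $\frac1{2\pi}\int_0^{2\pi}\cos x_0\,dt$ and $\frac1{2\pi}\int_0^{2\pi}\cos x_\pi\,dt$ equal $\pm J_k(-b/\mu)$ up to the stated error.

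Next I would identify the main term. Let $\theta(t)=kt+\frac b\mu\sin t$ be the solution of the equation with $\cos x$ dropped and $a=k\mu$. By the Jacobi--Anger expansion $e^{i\frac b\mu\sin t}=\sum_n J_n(\frac b\mu)e^{int}$,
\[
\frac1{2\pi}\int_0^{2\pi}\cos\theta(t)\,dt=\operatorname{Re}\frac1{2\pi}\int_0^{2\pi}e^{i(kt+\frac b\mu\sin t)}\,dt=J_{-k}(b/\mu)=J_k(-b/\mu),
\]
since $J_{-k}(w)=(-1)^kJ_k(w)=J_k(-w)$ for integer $k$. For the $\pi$-boundary the comparison phase is $\pi+kt+\frac b\mu\sin t$, so the main term changes sign; this is the source of the opposite signs in front of $J_k(-b/\mu)$ in the two estimates \eqref{eq:asymp-bessel}. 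It remains to bound $\int_0^{2\pi}[\cos x_0-\cos\theta]\,dt$.

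To this end I would control $\psi:=x_0-\theta$. With $\delta:=a_{0,k}(b)-k\mu$ one has $\psi'=(\delta+\cos x_0)/\mu$ and $\psi(0)=0$, so $\psi(t)=\frac{\delta t}\mu+\frac1\mu\int_0^t\cos x_0\,ds$. A short argument using the monotonicity of $\rho$ in $a$ places $a_{0,k}(b)$ in the range \eqref{eq:thm-const-cond}, whereupon Theorem~\ref{thm:asymp-const} (applied on the tongue boundary, where $\rho=k$) yields the a priori bound $|\delta|=O(\sqrt{\mu/b})$, hence $|\psi(t)|=O(1/\mu)$ uniformly on $[0,2\pi]$. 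The exact trigonometric identity
\[
\cos x_0-\cos\theta=-\psi\sin\theta+\cos\theta(\cos\psi-1)+\sin\theta(\psi-\sin\psi)
\]
then separates the leading oscillatory term $-\psi\sin\theta$ of size $O(1/\mu)$ from remainders of sizes $O(1/\mu^2)$ and $O(1/\mu^3)$.

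The main obstacle is the estimation of these oscillatory integrals against the rapidly rotating phase $\theta$. Its derivative $\theta'=k+\frac b\mu\cos t$ has size $\sim b/\mu$ except near its two zeros, which lie close to $t=\pi/2$ and $t=3\pi/2$ (they exist because $|k|\le C_1'\sqrt{b/\mu}\ll b/\mu$). Away from these stationary points I would integrate by parts using $\sin\theta=-\frac1{\theta'}\frac{d}{dt}\cos\theta$, each step gaining a factor $\mu/b$ and so converting the $O(1/\mu)$ contribution of $-\psi\sin\theta$ into $O(1/b)$; the subtlety is that $\psi$ is itself an oscillatory integral of $\cos x_0$, so one must differentiate it and keep the $\mu$-dependence sharp through every integration by parts. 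Over shrinking neighbourhoods of $t=\pi/2,3\pi/2$, where $1/\theta'$ is not controlled, I would estimate directly; the balance between the growth of $1/\theta'$ and the width of these neighbourhoods is precisely what produces the logarithm $\ln(b/\mu)$ and the $1/\mu^3$ term in the bound \eqref{eq:asymp-bessel}. Assembling the contributions and dividing by $2\pi\mu$ then yields the claimed estimates.
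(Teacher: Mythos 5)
Your architecture coincides with the paper's: the exact identity obtained by integrating \eqref{eq:Joseph} over a period, identification of the main term with $J_k(-b/\mu)$ via \eqref{eq:bessel-def}, an a priori localization of $a_{0,k}(b)$ inside the admissible range of Theorem~\ref{thm:asymp-const}, and then control of the correction $\psi=x_0-\theta$. The one genuinely different ingredient is your treatment of the oscillatory integral: you integrate by parts against the phase $\theta'=k+\frac b\mu\cos t$ and handle neighbourhoods of its two zeros separately, whereas the paper cuts $[0,2\pi]$ into arcs on which $\hat x$ advances by $2\pi$ and applies Lemma~\ref{lem:avg-diff} together with the zero average of $\sin$. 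Both devices extract the same gain of order $\mu/(b\abs{\cos t})$ per unit time and produce the $\ln(b/\mu)$ from $\int dt/\abs{\cos t}$, so that substitution is legitimate; since $\psi'=(\cos x_0+a-k\mu)/\mu$ is read off directly from the ODE, a single integration by parts suffices.

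The gap is in the error accounting for $\psi$. You assert $\norm{\psi}_{C^0}=O(1/\mu)$ and then treat $\cos\theta(\cos\psi-1)$ and $\sin\theta(\psi-\sin\psi)$ as remainders of sizes $O(1/\mu^2)$ and $O(1/\mu^3)$. These terms are exactly the ones you propose to integrate \emph{without} any gain from oscillation, so their contribution to the final estimate would carry no factor $1/b$ at all, and the bound $\frac1b\bigl(K_1'+K_2'/\mu^3+K_3'\ln(b/\mu)\bigr)$ cannot follow. What is needed (and what the paper establishes in \eqref{eq:norm-psi}) is $\norm{\psi}_{C^0}=O\bigl(b^{-1/2}(\mu^{-1/2}+\mu^{-3/2})\bigr)$, so that $\norm{\psi}_{C^0}^2$ already contains a factor $1/b$. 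Obtaining this requires a bound on $\int_0^t\cos x_0(s)\,ds$ for \emph{every} $t\in[0,2\pi]$, not only $t=2\pi$: Theorem~\ref{thm:asymp-const} controls only the full-period integral through the rotation number, so you need the partial-interval statement that the paper isolates as Proposition~\ref{prop:asymp-int-const} — a step your proposal skips entirely (and note also that on the boundary $\abs{\delta}=\abs{a-k\mu}$ is $O\bigl((\mu+1)/\sqrt{b\mu}\bigr)$, not $O(\sqrt{\mu/b})$, so the $\mu$-powers must be tracked more carefully than you do). Finally, the extra Taylor term $\sin\theta(\psi-\sin\psi)$ that you introduce (the paper keeps $\sin\theta\sin\psi$ intact) is of size $\norm{\psi}_{C^0}^3=O(b^{-3/2}\mu^{-9/2})$ even with the sharp bound, which is not absorbed by $\frac{\mu}{b}\cdot\frac{K_2'}{\mu^3}$ uniformly over the region $b\ge C_2'\mu$ when $\mu$ is small; this cubic term must therefore also be passed through your integration by parts (its derivative is $\psi'(1-\cos\psi)$) rather than estimated by its pointwise size, or simply not split off in the first place.
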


Theorem \ref{thm:asymp-bessel} is our main result -- it shows how the boundaries of Arnold tongues could be approximated by Bessel functions if $b$ is sufficiently large; this is illustrated by Figure \ref{fig:graph}.

\bigskip

Recall that the Bessel function of the first kind can be defined as
\begin{equation}\label{eq:bessel-def}
J_k(-z)=\frac{1}{2\pi} \int_0^{2\pi} \cos (kt+z \sin t) dt.
\end{equation}
It has the following asymptotics for large $z$ (see \cite{W}):
\begin{equation*}
J_k(-z)=\sqrt{\frac{2}{\pi z}}\cos\biggl(-z-\frac{k\pi}{2}+\frac{\pi}{4}\biggr)+O\biggl(\frac1{z^{3/2}}\biggr)\quad\text{as $z\to+\infty$}.
\end{equation*}
Applying this to \eqref{eq:asymp-bessel} we obtain
\begin{equation}\label{eq:asymp-bessel-expanded}
a_{\dots,k}(b)=k\pm\sqrt{\frac{2}{\pi b\mu}}\cos\biggl(\frac{b}{\mu}-\frac{k\pi}{2}+\frac{\pi}{4}\biggr)+O_\mu(b^{-1}\ln b).
\end{equation}
(Here $O_\mu(\,\cdot\,)$ is $O(\,\cdot\,)$ with the constant depending on $\mu$.) Therefore, the Bessel asymptotics is indeed the main term for $a_{\dots,k}(b)$.
In particular, \eqref{eq:asymp-bessel-expanded} means that the graphs of $a_{0,k}(b)$ and $a_{\pi,k}(b)$ do have infinitely many intersections, that is, each Arnold tongue has infinitely many horizontal sections of zero width. The points $(a,b)$ on the plane of parameters corresponding to the intersections of the boundaries of some Arnold tongue are clearly very special. Poincar\'{e} map $P_{a,b, \mu}$ corresponding to such points is an identity map.

\begin{definition}
Point $(a,b) \in \mathbb{R}^2$, $b \neq 0$ on the boundary of the Arnold tongue with $\rho_{a,b,\mu}=k \in \mathbb{Z}$ is called an \emph{adjacency point} if it lies on the intersection of the boundaries, i.e. $a=a_{0,k}(b)=a_{\pi,k}(b)$.
\end{definition}

\begin{figure}
\begin{center}
\includegraphics*[scale=1]{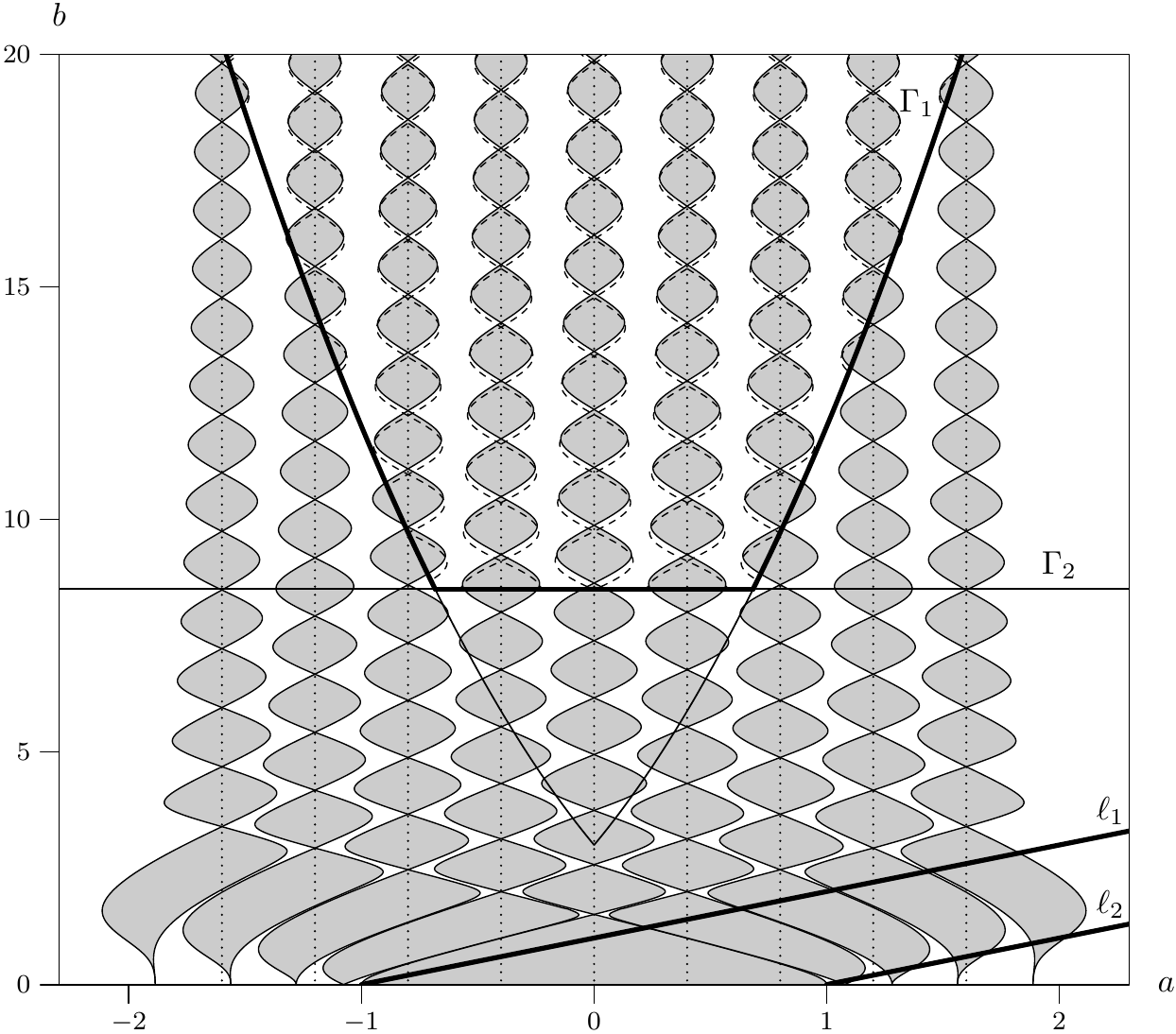}
\caption[]{Arnold tongues of Josephson equation in the domain on the plane of parameters $(a,b)$ for a fixed $\mu=0.4$.
\smallskip

Grey domains are Arnold tongues $E_k$ for $k=-4,\dots,4$, their boundaries (solid lines) are
curves $a=a_{0,k}(b)$ and $a=a_{\pi,k}(b)$.

Curves $\Gamma_1$ and $\Gamma_2$ are defined by conditions of the form \eqref{eq:thm-const-cond}.
The estimates of Theorem~\ref{thm:asymp-bessel} are applicable in the domain above both $\Gamma_1$ and $\Gamma_2$ (contoured with bold line).  Relations between conditions of the form \eqref{eq:thm-const-cond} and the conditions \eqref{eq:thm-bessel-cond} of Theorem~\ref{thm:asymp-bessel} are discussed in the first part of the proof of this theorem. Dashed lines in this domain represent Bessel approximations given by Theorem~\ref{thm:asymp-bessel}.

Dotted lines are the lines $a=k\mu$, which contain all adjacency points of Arnold tongues \cite{oneline}.

Domain between the lines $\ell_1$ and $\ell_2$ is where the slow-fast techniques work \cite{KRS}.

The computer simulations for this picture were done by I.Schurov.}\label{fig:graph}
\end{center}
\end{figure}

Recently many interesting results on the structure of Arnold tongues for the Josephson equation were discovered.
Here we present a brief summary.

First of all, the $k$-th Arnold tongue $E_k$ intersect the line $b=0$ at one point $(\operatorname{sgn} k\cdot \sqrt{k^2 \mu^2 +1}, 0)$ if $k\ne 0$, and $E_0$ intersects this line by the segment
$[-1,1]$ (for $b=0$ the equation \eqref{eq:Joseph} does not depend on time and can be easily integrated).
As we have noted above, Theorem~\ref{thm:asymp-bessel} implies that each tongue has infinitely many adjacency points.

What is more surprising, Figure \ref{fig:graph} suggests the following conjecture: the adjacency points of a $k$-th Arnold tongue lie on the same line $a(b) \equiv k \mu$ (dotted lines on Fig.~\ref{fig:graph}). This is proven in \cite{oneline} for $\mu \geq 1$ and the proof uses the classical theory of non-autonomous linear equations of complex variable.
For $\mu<1$ this fact is not yet proven and rests a reasonable conjecture. The difficulty resides in a study of adjacency points near the line $b=0$.

The result in \cite{oneline} is the only global non-trivial result on the structure of Arnold tongues of \eqref{eq:Joseph}, other results concentrate attention on the behaviour in some domains of the parameter plane.

For instance, for $\mu$ small enough the techniques of slow-fast systems can be used to show that the domain between the lines  $\ell_1=\{b=a+1\}$ and $\ell_2=\{b=a-1\}$ is filled up tightly with Arnold tongues and the distances between the tongues diminish exponentially in $\mu$. For the review of slow-fast techniques for \eqref{eq:Joseph} see \cite{KRS}.

The overall picture of the behaviour of Arnold tongues is the following: in any finite domain around the line $b=0$ the tongues fill up tightly the space \cite{KRS} and for $b$ tending to infinity (when ``$b$ is bigger than $\mu$ is smaller'') Bessel behaviour prevails over the slow-fast one. This is, however, very sketchy, and many questions still can be asked on a local behaviour of Arnold tongues.
As an example, it seems from the picture that the right boundaries of Arnold tongues $E_k$, $k>0$, have inflection points on the line $\ell_2$. We have no idea if it is true or not and how to prove it.
\medskip

Now let us sketch proofs of Theorem \ref{thm:asymp-const} and Theorem \ref{thm:asymp-bessel}. First of all, we rewrite \eqref{eq:Joseph} as an integral equation
\begin{equation}\label{eq:integral}
x(t)-x(0)=\frac{at+b\sin t+\int_0^t\cos x(\tau)\,d\tau}{\mu}
\end{equation}
and use the fact that for the most part of the segment $[0,2\pi]$ the function $\cos x(t)$ oscillates very fast, since $dx/dt$ is large if only $\abs{\cos t}$ is not very small. It will be shown that this implies that the integral in \eqref{eq:integral} is quite small,
hence for all solutions of \eqref{eq:Joseph} the difference $x(2\pi)-x(0)=\widetilde P_{a,b,\mu}(x(0))-x(0)$ is close to $2\pi a/\mu$.
But if the circle map is uniformly $2\pi\eps$-close to the rigid rotation by the angle $2\pi\alpha$, then its rotation number is
$\eps$-close to $\alpha$. Therefore, inside the $k$-th Arnold tongue $a/\mu$ should be close to $k$, hence $a$ is close to $k\mu$.

For the second theorem, we expand the integral in \eqref{eq:integral} using the formula \eqref{eq:integral} itself:
\begin{equation}\label{eq:dbl-integral}
x(2\pi)-x(0)=\frac{2\pi a}{\mu}+\frac1\mu\int_0^{2\pi}\cos\left(\frac{a\tau+b\sin\tau+\int_0^\tau\cos x(s)\,ds}\mu+x(0)\right)d\tau.
\end{equation}
On the boundary of the Arnold tongue, where either $a=a_{0,k}(b)$ or $a=a_{\pi,k}(b)$, the left-hand side equals $2\pi k$ if either $x(0)=0$ or $x(0)=\pi$.
We will show that the inner integral is small and its influence on the value of the outer integral is also small so it can be dropped.
Then, we replace $a\tau$ with $k\mu\tau$ inside the outer integral (since $a-k\mu$ is small due to Theorem~\ref{thm:asymp-const}).
This yields a change of the outer integral in \eqref{eq:dbl-integral} by the amount of the next order of magnitude. Therefore
\begin{equation*}
2\pi k\approx \frac{2\pi a}{\mu}+\frac1\mu\int_0^{2\pi}\cos\left(k\tau+\frac{b}{\mu}\sin\tau+x(0)\right)d\tau.
\end{equation*}
The integral on the right-hand side can be expressed in terms of $J_k(z)$ by \eqref{eq:bessel-def} and we thus obtain
\begin{equation*}
k\approx \frac{a}{\mu}\pm\frac1\mu J_k(-b/\mu),
\end{equation*}
where the sign is ``$+$'' if $x(0)=0$ and ``$-$'' if $x(0)=\pi$.

\medskip

The remaining part of this paper is organized as follows. In the next section we obtain several estimates for the integral
$\int_0^\tau\cos x(s)\,ds$ and related values. In Section~\ref{sec:proofs} we deduce Theorems~\ref{thm:asymp-const} and~\ref{thm:asymp-bessel}
from these estimates. Finally, in Section~\ref{sec:general} we discuss partial generalizations of these results to the equations of type~\eqref{eq:general}.

\section{Estimations of the integrals}

In what follows in the next section we will need estimates for the integral expressions contained both in \eqref{eq:integral} and \eqref{eq:bessel-def}. Fortunately, these estimates can be obtained simultaneously.
Indeed, consider an equation
\begin{equation}\label{eq:Joseph_bessel_comb}
\frac{dx}{dt}=\frac{\gamma\cos x +a + b \cos t}{\mu}.
\end{equation}
If $\gamma=1$, we obtain the standard Josephson equation \eqref{eq:Joseph}, while if $\gamma=0$ we obtain integrable differential
equation with solutions
\begin{equation*}
x(t)=x(0)+\frac{at+b\sin t}{\mu}.
\end{equation*}
Therefore, if $\hat x(t)$ is the solution of an equation corresponding to $\gamma=0$ with an initial condition $\hat x(0)=0$ , then $\int_0^{2\pi} \cos \hat x(\tau)\,d\tau$ coincides with the integral in~\eqref{eq:bessel-def} for $k=\frac{a}{\mu}$ and $z=-\frac{b}{\mu}$.

Below we always assume that
\begin{equation*}
\abs{\gamma}\le 1.
\end{equation*}

The main instrument in our proof is the following Lemma \ref{lem:avg-diff}. Informally speaking, it states that if $x(t)$ is moving
with almost constant speed, then the time average of a bounded function $\psi$ and its space average along the same arc of a trajectory are close to each other.

\begin{lemma}\label{lem:avg-diff}Suppose that $\dot x(t)$ is of the constant sign for $t\in[t_0,t_1]$.
Denote
\begin{equation*}
\abs{\dot x}_{\min}=\min_{t\in[t_0,t_1]}\abs{\dot x(t)},\quad \abs{\dot x}_{\max}=\max_{t\in[t_0,t_1]}\abs{\dot x(t)},\quad
\osc_{[t_0,t_1]}(\dot x)=\abs{\dot x}_{\max}-\abs{\dot x}_{\min}.
\end{equation*}
Then for any bounded integrable function $\psi$ on a circle we have
\begin{equation}\label{eq:avg-diff}
\left|\frac1{t_1-t_0}\int_{t_0}^{t_1}\psi(x(t))\,dt-\frac1{x_1-x_0}\int_{x_0}^{x_1}\psi(x)\,dx\right|\le
\frac{\osc_{[t_0,t_1]}(\dot x)}{\abs{\dot x}_{\min}}\cdot\norm{\psi}_{C^0},
\end{equation}
where
$x_0=x(t_0)$, $x_1=x(t_1)$, $\norm{\psi}_{C^0}=\sup_{x\in\mathbb R/2\pi\mathbb Z}\abs{\psi(x)}$.
\end{lemma}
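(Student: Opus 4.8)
The plan is to reduce both averages to integrals over $t$ and to exploit that they differ only through the choice of weight. Since $\dot x$ keeps a constant sign on $[t_0,t_1]$, I may assume without loss of generality that $\dot x>0$ there: the case $\dot x<0$ follows by replacing $x$ with $-x$ and $\psi$ with $\psi(-\,\cdot\,)$, which leaves $\abs{\dot x}_{\min}$, $\abs{\dot x}_{\max}$, and the oscillation unchanged while turning both averages into their analogues for the new data. Write $w(t)=\dot x(t)$ and $T=t_1-t_0$, so that $w_{\min}=\abs{\dot x}_{\min}$, $w_{\max}=\abs{\dot x}_{\max}$, and $\osc_{[t_0,t_1]}(\dot x)=w_{\max}-w_{\min}$.

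The key step is the change of variables $x=x(t)$ in the space average. Since $dx=w(t)\,dt$ and $x_1-x_0=\int_{t_0}^{t_1}w(t)\,dt$, the space average becomes a \emph{weighted} time average:
$$\frac{1}{x_1-x_0}\int_{x_0}^{x_1}\psi(x)\,dx=\frac{\int_{t_0}^{t_1}\psi(x(t))\,w(t)\,dt}{\int_{t_0}^{t_1}w(t)\,dt}.$$
Thus both quantities in \eqref{eq:avg-diff} are averages of $\psi(x(t))$ over $[t_0,t_1]$, the first with uniform weight and the second with weight $w(t)$. Setting $\bar w=\frac1T\int_{t_0}^{t_1}w(t)\,dt=(x_1-x_0)/T$, a direct computation gives the exact identity
$$\frac1T\int_{t_0}^{t_1}\psi(x(t))\,dt-\frac{1}{x_1-x_0}\int_{x_0}^{x_1}\psi(x)\,dx=\frac{1}{T\bar w}\int_{t_0}^{t_1}\psi(x(t))\bigl(\bar w-w(t)\bigr)\,dt.$$
This makes the annihilation of constants transparent: replacing $\psi$ by a constant makes the right-hand side vanish, because $\int_{t_0}^{t_1}(\bar w-w)\,dt=0$.

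It then remains to estimate the right-hand side. Bounding $\abs{\psi(x(t))}\le\norm{\psi}_{C^0}$ and using that both $\bar w$ and $w(t)$ lie in $[w_{\min},w_{\max}]$, so that $\abs{\bar w-w(t)}\le w_{\max}-w_{\min}=\osc_{[t_0,t_1]}(\dot x)$, I obtain
$$\left|\frac1T\int_{t_0}^{t_1}\psi(x(t))\,dt-\frac{1}{x_1-x_0}\int_{x_0}^{x_1}\psi(x)\,dx\right|\le\frac{\norm{\psi}_{C^0}}{T\bar w}\cdot\osc_{[t_0,t_1]}(\dot x)\cdot T=\frac{\osc_{[t_0,t_1]}(\dot x)}{\bar w}\,\norm{\psi}_{C^0}.$$
Finally $\bar w\ge w_{\min}=\abs{\dot x}_{\min}$ yields the claimed bound \eqref{eq:avg-diff}.

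I do not expect a serious obstacle here; the only points that require care are the bookkeeping in the weighted-average identity and the observation that the mean $\bar w$ itself lies between $w_{\min}$ and $w_{\max}$, which is exactly what makes the crude pointwise bound $\abs{\bar w-w}\le\osc_{[t_0,t_1]}(\dot x)$ suffice. The constant-sign hypothesis enters precisely to guarantee $w_{\min}>0$ (so that $\bar w>0$ and the division is legitimate) and to make $x=x(t)$ a genuine monotone change of variable.
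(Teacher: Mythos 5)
Your proof is correct and is essentially the paper's argument in mirror image: the paper changes variables to write the time average as a weighted space average and bounds the pointwise deviation of the weight $\frac{x_1-x_0}{t_1-t_0}\cdot\frac{dt}{dx}$ from $1$, while you change variables the other way and bound the deviation of $w(t)$ from its mean $\bar w$ --- under the substitution $dx = w\,dt$ these are literally the same identity. Your intermediate bound $\osc/\bar w$ is marginally cleaner, but both land on the same final estimate, so no further comment is needed.
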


\begin{proof}Indeed,
\begin{multline*}
\left|\frac1{t_1-t_0}\int_{t_0}^{t_1}\psi(x(t))\,dt-\frac1{x_1-x_0}\int_{x_0}^{x_1}\psi(x)\,dx\right|={}\\
{}=\left|\frac1{x_1-x_0}\int_{x_0}^{x_1}\left[\frac{x_1-x_0}{t_1-t_0}\cdot\frac{dt}{dx}-1\right]\psi(x)\,dx\right|.
\end{multline*}
It remains to show that the absolute value of the expression in square brackets is not more than $\osc_{[t_0,t_1]}(\dot x)/\abs{\dot x}_{\min}$.
Suppose that $\dot x(t)$ is positive on $[t_0,t_1]$.
Then
$(x_1-x_0)/(t_1-t_0)$ and $dx/dt$ belong to $[\abs{\dot x}_{\min},\abs{\dot x}_{\max}]$, hence
\begin{equation*}
\frac{\abs{\dot x}_{\min}}{\abs{\dot x}_{\max}}-1\le \frac{x_1-x_0}{t_1-t_0}\cdot\frac{dt}{dx}-1\le \frac{\abs{\dot x}_{\max}}{\abs{\dot x}_{\min}}-1,
\end{equation*}
and finally, we obtain that
\begin{equation*}
\left|\frac{x_1-x_0}{t_1-t_0}\cdot\frac{dt}{dx}-1\right|\le \frac{\osc_{[t_0,t_1]}(\dot x)}{\abs{\dot x}_{\min}},
\end{equation*}
so inequality \eqref{eq:avg-diff} is proven. The case of negative $\dot x(t)$ is treated similarly.
\end{proof}

Consider a solution $x(t)$ of equation \eqref{eq:Joseph_bessel_comb} on some interval $[0,t^*]$.
Take all points $0=t_0<t_1<\dots<t_k\le t^*$ such that $x(t_k)\equiv x(0)\pmod{2\pi}$ and
split the interval $[0,t^*]$ by these points into subsegments $I_i=[t_{i-1},t_i]$, $i=1,\dots, k$, and $I^*=[t_k,t^*]$.

As it was said before, the subintervals with ``small'' and ``not so small'' values
of $\abs{\dot x}$ are treated differently. Consider a set
\begin{equation*}
M_\delta=\{\tau\in[0,t]:\abs{\cos \tau}<\delta\},
\end{equation*}
where $\delta$ will be chosen later.

However, from now on we assume that
\begin{subequations}
\label{eq:delta}
\begin{gather}
A:=\abs{a}+1\le \frac{b\delta}{C_a},\label{subeq:delta:A}\\
\delta\ge C_b\sqrt{\frac{\mu}{b}},\label{subeq:delta:b-mu}\\
\delta\le 1\label{subeq:delta:1}
\end{gather}
\end{subequations}
where positive constants $C_a$ and $C_b$ are sufficiently large.

The subsegments $I_i$ and $I^*$ thus fall into the following categories:\\
\emph{type $1$ segments}: the subsegments that are fully covered by $M_\delta$;\\
\emph{type $2$ segments}: the subsegments $I_i$ that are partially covered by $M_\delta$ and $I^*$ in the case if it is not fully covered by $M_\delta$;\\
\emph{type $3$ segments}: the subsegments $I_i$ that are not intersecting with $M_\delta$.\\
Note that there is no more than five segments of type 2 since any such segment is either $I^*$,
or contains one of the four points $\tau$ with $\abs{\cos\tau}=\delta$ in its interior.
Let us also denote by $\calI_1$, $\calI_2$, and $\calI_3$ the union of all the segments of corresponding type.

\medskip

We start with an estimate for the length of the segment of type 2 or 3.

\begin{remark}In the exposition below we use the notation $u(s)=O(v(s))$
in the following precise sense: there exist a constant $C$ such that $\abs{u(s)}\le C v(s)$ (here $v(s)$ is always positive), and this constant
is assumed to be independent from parameters $a$, $b$, $\mu$, from the values of $\delta$, $C_a$, $C_b$ (but we still suppose that \eqref{eq:delta} holds), and from any other variables. Informally speaking, one can fix some large explicit values for $C_a$ and $C_b$ (say, one million) and then replace all $O(\,\cdot\,)$'s in the text below with some explicit estimates. We prefer not to make these hindsight substitutions
in order not to hide dependencies between constants in different estimates.
\end{remark}

\begin{proposition}\label{prop:length-bound}If $C_a$ and $C_b$ in \eqref{eq:delta} are sufficiently large,
then the following holds.

Let $I$ be any segment of type $2$ or $3$. Let $\hat t$ be any point in $I\setminus M_\delta$.
Then the length $\abs{I}$ of this segment satisfies the following estimate:
\begin{equation*}
\abs{I}= O\biggl(\frac{\mu}{b\cos\hat t}\biggr).
\end{equation*}
\end{proposition}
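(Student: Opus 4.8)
The plan is to exploit that near $\hat t$ the velocity $\dot x$ is large, so that the single revolution which $x$ performs over $I$ is completed within a short $t$-interval. The argument will not need to distinguish types $2$ and $3$: all that matters is that $I$ contains at least one point $\hat t$ with $\abs{\cos\hat t}\ge\delta$. Write $c:=\abs{\cos\hat t}$; since $\hat t\notin M_\delta$ we have $c\ge\delta$, and \eqref{eq:delta} gives $A\le b\delta/C_a\le bc/C_a$ together with $c^2\ge\delta^2\ge C_b^2\mu/b$. Two ingredients drive everything. First, a velocity lower bound: wherever $\abs{\cos t}\ge c/2$, using $\abs{\gamma\cos x}\le 1$ and $A=\abs a+1$,
\begin{equation*}
\mu\abs{\dot x}=\abs{\gamma\cos x+a+b\cos t}\ge b\abs{\cos t}-A\ge\frac{bc}{2}-\frac{bc}{C_a}\ge\frac{bc}{4},
\end{equation*}
once $C_a\ge 4$. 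Second, a bound on the oscillation of $x$: by the choice of the partition points, for $t$ strictly inside $I$ the value $x(t)$ is never congruent modulo $2\pi$ to the value of $x$ at the left endpoint of $I$; hence $x$ stays within one band of width $2\pi$ over $I$, so its range has length at most $2\pi$, and $\int_I\abs{\dot x}\,dt\le 2\pi$ as soon as $\dot x$ keeps a constant sign on $I$.

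Next I would localize $I$ inside the high-velocity window. Since $\abs{\frac{d}{dt}\cos t}\le 1$, we have $\abs{\cos t}\ge c/2$ whenever $\abs{t-\hat t}\le c/2$; denote $W=[\hat t-c/2,\hat t+c/2]$. The crux is to show $I\subseteq W$. This is the delicate point, and the one I expect to be the main obstacle: a priori $I$ could leave $W$ and enter the slow set $M_\delta$, where $\dot x$ may vanish and reverse sign, so that the ``single fast revolution'' picture fails. I would rule this out by a first-exit argument on each side of $\hat t$. Let $\sigma$ be the first instant to the right of $\hat t$ at which $t$ either reaches the right endpoint of $I$ or leaves $W$. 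On $[\hat t,\sigma]$ one has $\abs{\cos t}\ge c/2$, hence $\abs{\dot x}\ge bc/(4\mu)>0$ and $x$ is monotone, so
\begin{equation*}
\abs{x(\sigma)-x(\hat t)}=\int_{\hat t}^{\sigma}\abs{\dot x}\,dt\ge(\sigma-\hat t)\,\frac{bc}{4\mu}.
\end{equation*}
If the window were exited first, then $\sigma-\hat t=c/2$ and the right-hand side would equal $bc^2/(8\mu)$, which exceeds $2\pi$ because $c^2\ge C_b^2\mu/b$ and $C_b$ is large; this contradicts the oscillation bound $\abs{x(\sigma)-x(\hat t)}\le 2\pi$. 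Hence the right endpoint of $I$ is reached first and lies in $W$, and symmetrically for the left endpoint, so $I\subseteq W$.

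Once $I\subseteq W$, the bound $\abs{\dot x}\ge bc/(4\mu)$ holds on all of $I$, so $\dot x$ has constant sign and $x$ is monotone there; applying the oscillation bound a final time,
\begin{equation*}
\abs{I}\cdot\frac{bc}{4\mu}\le\int_I\abs{\dot x}\,dt\le 2\pi,
\end{equation*}
whence $\abs{I}\le 8\pi\mu/(bc)=O\bigl(\mu/(b\abs{\cos\hat t})\bigr)$. Thus the whole difficulty is concentrated in the localization $I\subseteq W$: excluding an excursion of $I$ into $M_\delta$ is precisely what the largeness of $C_b$ in \eqref{subeq:delta:b-mu} secures, and it is what legitimizes treating the revolution as a single constant-sign sweep through a fast regime.
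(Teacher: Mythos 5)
Your proof is correct, and it rests on the same two pillars as the paper's: a lower bound on $\abs{\dot x}$ obtained from the Lipschitz property of $\cos t$ together with $\abs{\cos\hat t}\ge\delta$ and conditions \eqref{eq:delta}, and the fact that $x$ oscillates by at most $2\pi$ over $I$ because the partition points exhaust the congruence class of $x(0)$ modulo $2\pi$. The only genuine difference is how the two arguments close the bootstrap, i.e.\ resolve the circularity that the speed bound degrades with the length of $I$ while the length bound needs the speed bound. The paper applies the inequality $\abs{\tilde I}\cdot\min_{\tilde I}\abs{\dot x}\le 2\pi$ to every subsegment $\tilde I\ni\hat t$ of length $y\in[0,L]$, obtaining the quadratic inequality $by^2-(b\abs{\cos\hat t}-A)y+2\pi\mu\ge 0$ on all of $[0,L]$ and concluding that $L$ is at most the smaller root, which is at most $8\pi\mu/(b\abs{\cos\hat t})$. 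You instead fix the explicit window $W=[\hat t-c/2,\hat t+c/2]$, $c=\abs{\cos\hat t}$, on which the speed bound $\abs{\dot x}\ge bc/(4\mu)$ holds unconditionally, and rule out an exit of $I$ from $W$ by a first-exit argument: exiting would force $x$ to sweep more than $2\pi$, by \eqref{subeq:delta:b-mu}. Both devices give the same constant $8\pi$ and require only that $C_a$ and $C_b$ exceed explicit absolute constants ($C_a\ge 4$, $C_b^2>16\pi$ for you, versus $C_a\ge 2$, $C_b^2\ge 32\pi$ for the paper); the paper's version is slightly more compact, while yours makes explicit where the largeness of $C_b$ enters --- preventing $I$ from reaching the slow set $M_\delta$ --- which the paper hides inside the positivity of the discriminant of the quadratic.
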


\begin{proof}The proof for type 3 segments is trivial: $x(t)$ travels distance not more than $2\pi$ with its speed bounded from below, thus
the time of the travel is bounded from above. However, for the type 2 segments we need a sort of bootstrapping argument: the lower bound for the speed holds only for initial moment $\hat t$ and it worsens as time goes; nevertheless, it worsens so slowly that we cannot travel distance of $2\pi$ for such a long time that the speed estimate is totally ruined.

Let us pass to the formal proof.
Denote $I=[t_-,t_+]$, $L=\abs{I}=t_+-t_-$.
Inequality  $\abs{x(t_+)-x(t_-)}\le 2\pi$ and the mean value theorem yields that
\begin{equation}\label{eq:len-bound}
\abs{I}\cdot \min_{I} \abs{\dot x}\le \abs{x(t_i)-x(t_{i-1})}\le 2\pi.
\end{equation}
For any $\tau\in I$ we have $\tau=\hat t+s$ for some $s$ with $\abs{s}\le L$. Therefore,
\begin{multline*}
\abs{\dot x(\hat t+s)}\ge \biggl|\frac{b\cos(\hat t+s)}{\mu}\biggr|-\biggl|\frac{\gamma\cos x+a}{\mu}\biggr|\\
{}\ge\frac{b\bigl(\abs{\cos\hat t}-\abs{\cos(\hat t+s)-\cos\hat t}\bigr)}{\mu}-\frac{A}{\mu}\ge\frac{b\abs{\cos\hat t}-A-b\cdot L}\mu
\end{multline*}
since the cosine is a Lipschitz function with constant equal to one.
Now \eqref{eq:len-bound} yields
\begin{equation*}
L\cdot\frac{b\abs{\cos\hat t}-A-b\cdot L}\mu\le 2\pi.
\end{equation*}
The same argument works for any subsegment $\tilde I\subset I$ such that $\hat t\in \tilde I$.
We can choose such $\tilde I$ to be of any length between zero and $L$, so
\begin{equation*}
by^2-(b\abs{\cos\hat t}-A)y+2\pi\mu\ge 0\quad\text{for any $y\in [0,L]$}.
\end{equation*}
Since $\abs{\cos\hat t}>\delta$, one can see that if $C_a\ge 2$ and $C_b\ge 32\pi$ then it follows
from \eqref{subeq:delta:A} and \eqref{subeq:delta:b-mu} that this quadratic polynomial has two positive real roots.
Therefore, $L$ does not exceed its smaller root:
\begin{multline*}
L\le \frac{b\abs{\cos\hat t}-A-\sqrt{(b\abs{\cos\hat t}-A)^2-8\pi b\mu}}{2b}\\
{}=\frac{4\pi \mu}{b\abs{\cos\hat t}-A+\sqrt{(b\abs{\cos\hat t}-A)^2-8\pi b\mu}}\le
\frac{4\pi\mu}{b\abs{\cos\hat t}-A}\le \frac{8\pi\mu}{b\abs{\cos\hat t}}.
\end{multline*}
The last inequality here uses \eqref{subeq:delta:A} with $C_a\ge 2$. The proposition is proven.
\end{proof}

This yields the estimate of a Lebesgue measure of $\calI_1\cup\calI_2$, which we denote by the symbol $\mes(\,\cdot\,)$.
\begin{proposition}\label{prop:int-type12}
If $C_a$ and $C_b$ in \eqref{eq:delta} are sufficiently large,
then
\begin{equation*}
\mes(\calI_1\cup\calI_2)= O\biggl(\dfrac{\mu}{b\delta}+\delta\biggr).
\end{equation*}
\end{proposition}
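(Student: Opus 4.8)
The plan is to estimate $\mes(\calI_1\cup\calI_2)$ by handling the two pieces separately. For $\calI_1$, the union of type $1$ segments is by definition contained in $M_\delta$, so $\mes(\calI_1)\le\mes(M_\delta)$. Since $M_\delta=\{\tau\in[0,t^*]:\abs{\cos\tau}<\delta\}$ and $t^*\le 2\pi$, this set is a union of at most finitely many short arcs around the zeros of $\cos\tau$. On each such arc the condition $\abs{\cos\tau}<\delta$ localizes $\tau$ to a neighborhood of width $O(\delta)$ of a point where $\cos$ vanishes, because near its zeros the cosine has derivative bounded away from zero. Hence $\mes(M_\delta)=O(\delta)$, which gives the second term in the claimed bound.

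For $\calI_2$, I would use the structural observation already made before the statement: there are at most five segments of type $2$. Each such segment $I$ is either $I^*$ or contains in its interior one of the four points $\tau$ with $\abs{\cos\tau}=\delta$; in any case, by definition a type $2$ segment is not fully covered by $M_\delta$, so it contains at least one point $\hat t\in I\setminus M_\delta$. This is exactly the hypothesis of Proposition~\ref{prop:length-bound}, which then yields $\abs{I}=O(\mu/(b\cos\hat t))$. Since $\hat t\notin M_\delta$ means $\abs{\cos\hat t}\ge\delta$, we get $\abs{I}=O(\mu/(b\delta))$ for each of the five type $2$ segments. Summing over the (at most five) segments gives $\mes(\calI_2)=O(\mu/(b\delta))$, the first term in the bound.

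Adding the two estimates yields $\mes(\calI_1\cup\calI_2)\le\mes(\calI_1)+\mes(\calI_2)=O(\delta)+O(\mu/(b\delta))=O(\mu/(b\delta)+\delta)$, as claimed. The only point requiring a little care is the choice of the point $\hat t$ for each type $2$ segment: I must verify that a type $2$ segment indeed contains a point outside $M_\delta$ so that Proposition~\ref{prop:length-bound} applies with $\abs{\cos\hat t}\ge\delta$. This follows immediately from the definition (a type $2$ segment is only \emph{partially} covered by $M_\delta$), so the main content is really just the bookkeeping of bounding the total length of $M_\delta$ by $O(\delta)$ and invoking the per-segment length bound a bounded number of times. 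I expect no genuine obstacle here, since the hard analytic work—the bootstrapping length estimate for a single segment—has already been carried out in Proposition~\ref{prop:length-bound}; this proposition is essentially a packaging of that result together with an elementary measure estimate for $M_\delta$.
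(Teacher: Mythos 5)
Your proof is correct and follows essentially the same route as the paper: bound $\mes(\calI_1)$ by $\mes(M_\delta)=O(\delta)$, and bound each of the at most five type~2 segments by $O(\mu/(b\delta))$ via Proposition~\ref{prop:length-bound} applied at a point $\hat t\in I\setminus M_\delta$ (the paper picks $\hat t$ with $\abs{\cos\hat t}=\delta$ exactly, which changes nothing). No gaps.
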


\begin{proof}The set $\calI_2$ consists of not more than five segments, and the length of each of them is bounded
by Proposition~\ref{prop:length-bound} (we choose $\hat t$ with $\abs{\cos\hat t}=\delta$):
\begin{equation*}
\mes\calI_2= 5\cdot O(\mu / b\delta).
\end{equation*}
The set $\calI_1$ is a subset of $M_\delta$, hence
\begin{equation*}
\mes\calI_1\le \mes M_{\delta}\le 4\arcsin\delta\le 4\cdot\frac{\pi}{2}\delta=O(\delta).\qedhere
\end{equation*}
\end{proof}

Now, let us estimate the integral over any subsegment $I_k$ of type 3.

\begin{proposition}\label{prop:int-type3int}If $C_a$ and $C_b$ in \eqref{eq:delta} are sufficiently large
then for any bounded function $h\colon \mathbb R/2\pi\mathbb Z\to\mathbb R$ with zero average:
\begin{equation*}
\int_0^{2\pi}h(\xi)\,d\xi=0,
\end{equation*}
and for any segment $I_j$ of the type~$3$ we have
\begin{equation*}
\left|\int_{I_j}h(x(\tau))\,d\tau\right|\le
\norm{h}_{C^0}\int_{I_j}\biggl[O\biggl(\frac1{b\abs{\cos\hat t}}\biggr)+
O\biggl(\frac{\mu}{b\cos^2\hat t}\biggr)\biggr]\,d\hat t.
\end{equation*}
\end{proposition}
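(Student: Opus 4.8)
The plan is to apply Lemma~\ref{lem:avg-diff} on $I_j$ with $\psi=h$ and to exploit the vanishing of the average of $h$. The crucial geometric fact is that a type~$3$ segment carries the solution through exactly one full turn of $x$: the segment $I_j=[t_{j-1},t_j]$ is bounded by two consecutive instants at which $x$ returns to $x(0)$ modulo $2\pi$, and since $\abs{\cos t}\ge\delta$ on $I_j$ we have $\abs{\dot x}\ge(b\delta-A)/\mu>0$ by~\eqref{eq:delta}; hence $\dot x$ keeps a constant sign, $x$ is monotone, and $x_1-x_0=\pm2\pi$. Consequently the space average in~\eqref{eq:avg-diff} satisfies $\frac{1}{x_1-x_0}\int_{x_0}^{x_1}h(x)\,dx=\frac1{2\pi}\int_0^{2\pi}h=0$, so Lemma~\ref{lem:avg-diff} collapses to $\left|\int_{I_j}h(x(\tau))\,d\tau\right|\le\abs{I_j}\,\osc_{I_j}(\dot x)\,\abs{\dot x}_{\min}^{-1}\,\norm{h}_{C^0}$.

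Next I would estimate the factor $\osc_{I_j}(\dot x)/\abs{\dot x}_{\min}$ by splitting the oscillation into its two sources. Since $\mu\dot x=\gamma\cos x+a+b\cos t$ with $a$ constant, $\osc_{I_j}(\mu\dot x)$ is at most the oscillation of $\gamma\cos x$, bounded by $2\abs\gamma\le2$, plus that of $b\cos t$, bounded by $b\abs{I_j}$ because cosine is $1$-Lipschitz. For the denominator I would fix an arbitrary $\hat t\in I_j$ (so $\abs{\cos\hat t}\ge\delta$) and repeat the speed estimate from the proof of Proposition~\ref{prop:length-bound}: writing $\tau=\hat t+s$ with $\abs s\le\abs{I_j}=O(\mu/(b\abs{\cos\hat t}))$ gives $\abs{\dot x(\tau)}\ge(b\abs{\cos\hat t}-b\abs{I_j}-A)/\mu\ge b\abs{\cos\hat t}/(2\mu)$ once $C_a,C_b$ are large, whence $\abs{\dot x}_{\min}\ge b\abs{\cos\hat t}/(2\mu)$.

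Combining the two estimates, for every $\hat t\in I_j$ one gets $\osc_{I_j}(\dot x)/\abs{\dot x}_{\min}\le O(1/(b\abs{\cos\hat t}))+O(\abs{I_j}/\abs{\cos\hat t})$, and one further use of $\abs{I_j}=O(\mu/(b\abs{\cos\hat t}))$ turns the second term into $O(\mu/(b\cos^2\hat t))$. The left-hand side is independent of $\hat t$, so integrating this pointwise inequality over $\hat t\in I_j$ converts $\abs{I_j}\cdot\osc_{I_j}(\dot x)/\abs{\dot x}_{\min}$ into $\int_{I_j}[\cdots]\,d\hat t$, which is exactly the claimed bound; it is essential here that a type~$3$ segment lies entirely outside $M_\delta$, so that the pointwise inequality holds at every $\hat t\in I_j$.

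The only genuinely substantive step — and the one I would guard most carefully — is the observation that a type~$3$ segment is precisely one full revolution of $x$, which makes the space average of the zero-mean function $h$ vanish identically rather than merely be small. Everything after that is constant-chasing, with the two sources of oscillation of $\dot x$ producing the two terms $1/(b\abs{\cos\hat t})$ and $\mu/(b\cos^2\hat t)$ respectively.
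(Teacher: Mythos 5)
Your proposal is correct and follows essentially the same route as the paper: apply Lemma~\ref{lem:avg-diff} with $\psi=h$, use that a type~3 segment is exactly one monotone revolution of $x$ so the space average of $h$ vanishes, bound $\osc_{I_j}(\dot x)$ by the two contributions $2/\mu$ and $b\abs{I_j}/\mu$, bound $\abs{\dot x}_{\min}$ from below by a constant multiple of $b\abs{\cos\hat t}/\mu$ via Proposition~\ref{prop:length-bound}, and integrate the resulting pointwise bound over $\hat t\in I_j$. Your explicit verification that $\dot x$ keeps a constant sign on $I_j$ (needed to invoke the lemma) is a detail the paper leaves implicit, but the argument is otherwise identical.
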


\begin{proof}It follows from Lemma~\ref{lem:avg-diff} that
\begin{equation}\label{eq:int-type3-est}
\frac1{\abs{I_j}}\left|\int_{I_j}h(x(\tau))\,d\tau\right|\le \norm{h}_{C^0}\cdot \frac{\osc_{I_j}(\dot x)}{\min_{I_j}\abs{\dot x}}.
\end{equation}
Here we use that $x(t_j)-x(t_{j-1})=\pm2\pi$, hence $\int_{x(t_{j-1})}^{x(t_j)}h(x)\,dx=0$.

In order to estimate expressions in the right-hand side of \eqref{eq:int-type3-est}, we take any $\hat t\in I_j$;
Proposition~\ref{prop:length-bound} and Lipschitz property of cosine then give us that
\begin{equation*}
\osc\nolimits_{I_j}(\cos t)\le \abs{I_j}\le O\biggl(\frac{\mu}{b\abs{\cos\hat t}}\biggr).
\end{equation*}
Further,
\begin{align}
\notag\osc\nolimits_{I_j}(\dot x(t))&{}\le \frac{1}{\mu}\bigl(\osc\nolimits_{I_j}\cos x(t)+b\osc\nolimits_{I_j}(\cos t)\bigr)\le
\frac{2}{\mu}+ O\biggl(\frac{1}{\abs{\cos\hat t}}\biggr),\\
\notag\min\nolimits_{I_j}\abs{\dot x(t)}&{}\ge
\frac{1}{\mu}\bigl(\min\nolimits_{I_j}\abs{b\cos t}-A\bigr)\ge
\frac{1}{\mu}\bigl(b\abs{\cos\hat t}-\osc\nolimits_{I_j}(b\cos t)-A\bigr)\\
\label{eq:mindotx}
&=\frac{1}{\mu}\biggl(b\abs{\cos\hat t}-O\biggl(\frac{\mu}{\abs{\cos\hat t}}\biggr)-A\biggr).
\end{align}
For sufficiently large $C_a$ and $C_b$, \eqref{subeq:delta:A} and \eqref{subeq:delta:b-mu}
make the second and the third terms in the right-hand side of \eqref{eq:mindotx} to be smaller than $b\abs{\cos\hat t}/3$,
hence
\begin{equation*}
\min_{I_j}\abs{\dot x(t)}\ge\frac{b\abs{\cos\hat t}}{3\mu}.
\end{equation*}
Therefore,
\begin{equation*}
\frac1{\abs{I_j}}\left|\int_{I_j}h(x(\tau))\,d\tau\right|\le
\norm{h}_{C^0}\cdot O\biggl(\frac{1}{b\abs{\cos\hat t}}+\frac{\mu}{b\cos^2\hat t}\biggr),
\end{equation*}
and it remains to integrate the last inequality over $\hat t\in I_j$.
\end{proof}

\section{Proofs of theorems}
\label{sec:proofs}

\begin{proof}[\proofname\ of Theorem~\textup{\ref{thm:asymp-const}}]
Note that if a circle map is uniformly $2\pi\eps$-close to the rigid rotation by the angle $2\pi a/\mu$, then its rotation number is
$\eps$-close to $a/\mu$. For any solution $x(t)$ of \eqref{eq:Joseph} we have
\begin{equation*}
\left|\frac{x(2\pi)-x(0)}{2\pi}-\frac{a}{\mu}\right|=\left|\frac{1}{2\pi\mu}\int_0^{2\pi}\cos x(t)\,dt\right|,
\end{equation*}
hence the first inequality in \eqref{eq:asymp-const} follows from the next proposition. The second inequality in \eqref{eq:asymp-const} uses
simple estimate $\ln z<2\sqrt{z}$.\end{proof}

\begin{proposition}
\label{prop:asymp-int-const}There exist positive constants $C_1,C_2,K_1,K_2$ such that the following holds.

If parameters $a,b,\mu$ satisfy \eqref{eq:thm-const-cond} then for any $t^*\in [0,2\pi]$ and any solution $x(t)$ of \eqref{eq:Joseph} we have
\begin{equation}\label{eq:asymp-int-const}
\left|\int_0^{t^*}\cos x(t)\,dt\right|\le K_1\sqrt{\frac{\mu}{b}}+\frac{K_2}{b}\ln\biggl(\frac{b}{\mu}\biggr).
\end{equation}
\end{proposition}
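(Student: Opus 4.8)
The plan is to decompose the interval $[0,t^*]$ exactly as set up above, into the subsegments $I_1,\dots,I_k,I^*$ sorted by type, and to write $\int_0^{t^*}\cos x(t)\,dt=\int_{\calI_1\cup\calI_2}\cos x(t)\,dt+\int_{\calI_3}\cos x(t)\,dt$, estimating the two pieces separately. The point is that all three preceding propositions apply verbatim to the function $h=\cos$, since $\int_0^{2\pi}\cos\xi\,d\xi=0$ and $\norm{\cos}_{C^0}=1$. Note also that the leftover segment $I^*$ can never be of type $3$ (on it $x$ does not complete a full $2\pi$, so the cancellation of a full-period space average is unavailable); it is always absorbed into $\calI_1\cup\calI_2$, which is why this split is clean. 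The auxiliary parameter $\delta$ will be left free and fixed only at the very end.

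For the first piece I would use the trivial bound $\abs{\cos x}\le 1$ together with the measure estimate of Proposition~\ref{prop:int-type12}, giving $\bigl|\int_{\calI_1\cup\calI_2}\cos x(t)\,dt\bigr|\le\mes(\calI_1\cup\calI_2)=O\bigl(\tfrac{\mu}{b\delta}+\delta\bigr)$. For the second piece I would sum the per-segment estimate of Proposition~\ref{prop:int-type3int} over all type~$3$ segments; as these are disjoint with union $\calI_3$, this yields $\bigl|\int_{\calI_3}\cos x(t)\,dt\bigr|\le\int_{\calI_3}\bigl[O\bigl(\tfrac1{b\abs{\cos\hat t}}\bigr)+O\bigl(\tfrac{\mu}{b\cos^2\hat t}\bigr)\bigr]\,d\hat t$.

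It then remains to estimate these two integrals. Since $\calI_3$ is disjoint from $M_\delta$, we have $\abs{\cos\hat t}\ge\delta$ on $\calI_3$, so both integrals are dominated by the corresponding ones over the full set $\{t\in[0,2\pi]:\abs{\cos t}\ge\delta\}$. Here two elementary bounds do the work: $\int_{\abs{\cos t}\ge\delta}\tfrac{dt}{\abs{\cos t}}=O\bigl(\ln\tfrac1\delta\bigr)$ (the antiderivative $\ln\abs{\sec t+\tan t}$ diverges only logarithmically as $t\to\pi/2$, cut off at distance $\sim\delta$) and $\int_{\abs{\cos t}\ge\delta}\tfrac{dt}{\cos^2 t}=O\bigl(\tfrac1\delta\bigr)$ (the antiderivative $\tan t$ reaches size $\sim1/\delta$ at the cutoff). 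Thus the type~$3$ contribution is $O\bigl(\tfrac1b\ln\tfrac1\delta+\tfrac{\mu}{b\delta}\bigr)$, and collecting everything gives $\bigl|\int_0^{t^*}\cos x(t)\,dt\bigr|=O\bigl(\tfrac{\mu}{b\delta}+\delta+\tfrac1b\ln\tfrac1\delta\bigr)$.

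The last step is the optimization over $\delta$, which is really where the hypotheses come from, and I expect this balancing (rather than any single estimate) to be the main point to get right. Balancing the two dominant terms $\mu/(b\delta)$ and $\delta$ forces $\delta\asymp\sqrt{\mu/b}$, so I would take $\delta=C_b\sqrt{\mu/b}$; then both terms are $O(\sqrt{\mu/b})$, while $\ln(1/\delta)=\tfrac12\ln(b/\mu)+O(1)=O(\ln(b/\mu))$, producing precisely the claimed bound $K_1\sqrt{\mu/b}+\tfrac{K_2}{b}\ln(b/\mu)$. The delicate thing is that this choice must be admissible, i.e. it must satisfy the three constraints \eqref{eq:delta}: the lower bound \eqref{subeq:delta:b-mu} holds by construction, while $\delta\le1$ and $\delta\ge C_a(\abs a+1)/b$ become exactly $b\ge C_b^2\mu$ and $\abs a+1\le(C_b/C_a)\sqrt{b\mu}$, which is nothing but the hypothesis \eqref{eq:thm-const-cond} with $C_2=C_b^2$ and $C_1=C_b/C_a$. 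In other words, the constants $C_1,C_2$ of the statement are simply the admissibility conditions for the error-balancing value of $\delta$, and once $\delta$ is pinned down the $O(\,\cdot\,)$ constants inherited from the propositions above assemble into $K_1,K_2$.
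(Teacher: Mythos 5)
Your proposal is correct and follows essentially the same route as the paper: the same decomposition by segment type, the trivial bound plus Proposition~\ref{prop:int-type12} on $\calI_1\cup\calI_2$, Proposition~\ref{prop:int-type3int} summed over $\calI_3$ with the elementary integrals $\int_{\abs{\cos t}\ge\delta}dt/\abs{\cos t}=O(\ln(1/\delta))$ and $\int_{\abs{\cos t}\ge\delta}dt/\cos^2t=O(1/\delta)$, and the choice $\delta=C_b\sqrt{\mu/b}$ with $C_1=C_b/C_a$, $C_2=C_b^2$. The only cosmetic difference is that the paper fixes $\delta$ at the outset rather than optimizing at the end; your admissibility check for \eqref{eq:delta} matches the paper's step~1 exactly.
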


\begin{proof}1. Fix values of $C_a$ and $C_b$ such that Propositions~\ref{prop:length-bound}, \ref{prop:int-type12}, and \ref{prop:int-type3int} hold for them. Let us also assume that $C_b\ge 2$.
Set
\begin{equation}\label{eq:C12-from-Cab}
\delta=C_b\sqrt{\frac{\mu}{b}},\quad C_1=\frac{C_b}{C_a},\quad C_2=C_b^2.
\end{equation}
One can see that if $a$, $b$, and $\mu$ satisfy \eqref{eq:thm-const-cond} with these values of $C_1$ and $C_2$ then
all inequalities in \eqref{eq:delta} hold.

2. Split the integral in \eqref{eq:asymp-int-const} into the integrals over subintervals $I_i$ and $I^*$. For the subintervals of types~1 or~2 we use Proposition~\ref{prop:int-type12} and bound the integrand by~$1$. For the subintervals of type~3 we use Proposition~\ref{prop:int-type3int}. Hence
\begin{equation*}
\left|\int_0^{2\pi}\cos x(t)\,dt\right|\le
O\biggl(\dfrac{\mu}{b\delta}+\delta\biggr)+
\int_{\calI_3}
O\biggl(\frac{1}{b\abs{\cos\hat t}}+\frac{\mu}{b\cos^2\hat t}\biggr)\,dt.
\end{equation*}
Since $\calI_3\subset [0,2\pi]\setminus M_\delta$, the last integral is not more than the corresponding integral over $[0,2\pi]\setminus M_\delta$, which equals
\begin{multline}\label{eq:int-calcul}
4\int_0^{\arccos\delta}O\biggl(\frac{1}{b\abs{\cos\hat t}}+\frac{\mu}{b\cos^2\hat t}\biggr)\,d\hat t\\
{}=O\biggl(\frac{1}{b}\biggr)\cdot\ln\frac{1+\sqrt{1-\delta^2}}{\delta}+O\biggl(\frac{\mu}{b}\biggr)\cdot\frac{\sqrt{1-\delta^2}}{\delta}\\
{}\le O\biggl(\frac{1}{b}\ln\frac{2}{\delta}\biggr)+O\biggl(\frac{\mu}{b\delta}\biggr)= O\biggl(\frac{1}{b}\biggl[\ln\sqrt{\frac{b}{\mu}}+\ln\frac{2}{C_b}\biggr]\biggr)+O\biggl(\sqrt{\frac{\mu}{b}}\biggr).
\end{multline}
As $C_b\ge 2$, the second term in square brackets is negative and can be discarded. This yields \eqref{eq:asymp-int-const}.
\end{proof}

\begin{proof}[\proofname\ of Theorem~\textup{\ref{thm:asymp-bessel}}]
The proof contains two parts. The core part (see items 2--6 below) shows that if some conditions similar to those of Theorem~\ref{thm:asymp-const} hold for $a=a_{0,k}(b,\mu)$ (or $a=a_{\pi,k}(b,\mu)$), $b$, and $\mu$, then $a$ is close to the Bessel function as stated in \eqref{eq:asymp-bessel}.
However, \emph{a priori} we do not know that for a given $b$ and $\mu$ the boundaries $a_{\dots,k}(b,\mu)$ of $k$-th Arnold tongue satisfy these estimates. Thus we start with preliminary part (item 1 below) showing that under some conditions on $k$, $b$, and $\mu$ the triples $(a_{0,k}(b,\mu),b,\mu)$ and $(a_{\pi,k}(b,\mu),b,\mu)$ satisfy conditions needed for the core part of the proof.

1. First of all, fix $C_a$ and $C_b$ such that Propositions~\ref{prop:length-bound}, \ref{prop:int-type12}, and \ref{prop:int-type3int} hold for them. Now fix values of $C_1$, $C_2$ and $\delta=C_b\sqrt{b/\mu}$ defined by \eqref{eq:C12-from-Cab}.

Let us show that if $C_1'$ and $C_2'$ are appropriately chosen, then for any $b$, $\mu$, and $k$ that satisfy \eqref{eq:thm-bessel-cond}, each one of the triples
\begin{equation}\label{eq:triples}
(k\mu,b,\mu),\quad (a_{0,k}(b,\mu),b,\mu),\quad (a_{\pi,k}(b,\mu),b,\mu)
\end{equation}
satisfies \eqref{eq:thm-const-cond}. For the first triple this obviously holds for any $C_1'\le C_1$, $C_2'\ge C_2$.
Consider the second triple (the argument for the third one is exactly the same).
If $C_1'$ is sufficiently small and $C_2'$  is sufficiently large, then the following inequalities hold:
\begin{equation}\label{eq:C12prime}
\frac{K_1}{\sqrt{C_2'}}+K_2C_1'< 1,\quad C_1'\le C_1/2,\quad C_2'\ge C_2,
\end{equation}
Take any constants $C_{1}'$ and $C_{2}'$ that satisfy \eqref{eq:C12prime}.
We now show that for any $b,\mu,k$ satisfying \eqref{eq:thm-bessel-cond} we have
\begin{equation}\label{eq:diff-a-kmu}
\abs{a_{0,k}(b,\mu)-k\mu}< 1.
\end{equation}
Indeed, the inequality \eqref{eq:diff-a-kmu} holds for all sufficiently large $b$ due to Theorem~\ref{thm:asymp-const}.
Therefore, if it fails for some $b'$, $\mu'$, and $k'$ satisfying \eqref{eq:thm-bessel-cond} then by continuity there exist $b''\ge b'$ such that
\eqref{eq:diff-a-kmu} ``almost holds'': $|a_{0,k'}(b'',\mu')-k'\mu'|=1$. Clearly, the triple $(b'',\mu',k')$ also satisfies \eqref{eq:thm-bessel-cond}, and the triple $(a_{0,k'}(b'',\mu'),b'',\mu')$ satisfies conditions \eqref{eq:thm-const-cond} of Theorem~\ref{thm:asymp-const} because
\begin{equation*}
\abs{a_{0,k'}(b'',\mu')}+1\le \abs{k'\mu'}+2\le 2C_1'\sqrt{b''\mu'}\le C_1\sqrt{b''\mu'}.
\end{equation*}
Therefore, Theorem~\ref{thm:asymp-const} yields
\begin{equation*}
\abs{a_{0,k'}(b'',\mu')-k'\mu'}\le K_1\sqrt{\frac{\mu'}{b'}}+\frac{K_2}{\sqrt{b'\mu'}}\le \frac{K_1}{\sqrt{C_2'}}+K_2C_1'< 1,
\end{equation*}
this contradicts our assumption $|a_{0,k'}(b'',\mu')-k'\mu'|=1$.

2. From now on we fix $C_{1,2}'$ that satisfy \eqref{eq:C12prime}. In particular this means
that Propositions~\ref{prop:length-bound}, \ref{prop:int-type12}, and \ref{prop:int-type3int} hold for all triples in \eqref{eq:triples},
where $b$, $\mu$, and $k$ satisfy~\eqref{eq:thm-bessel-cond}.

Consider a point $a_{0,k}(b,\mu)$ with such values of $b$, $\mu$, and $k$.
Let $x_0(t)$ be the solution of \eqref{eq:Joseph} with $a=a_{0,k}(b,\mu)$ such that $x_0(0)=0$.
As it was said before, then $x_0(2\pi)-x_0(0)=2\pi k$, and \eqref{eq:integral} yields
\begin{equation*}
k=\frac{x_0(2\pi)-x_0(0)}{2\pi}=\frac{a_{0,k}(b,\mu)}{\mu}+\frac1\mu\int_0^{2\pi}\cos x_0(\tau)\,d\tau.
\end{equation*}
Therefore,
\begin{equation}\label{eq:bessel-differ}
a_{0,k}(b,\mu)-k\mu+J_k\Bigl(-\frac b\mu\Bigr)=
-\frac1{2\pi}\int_0^{2\pi}\cos\Bigl(kt+\frac{b}{\mu}\sin t+\psi(t)\Bigr)-\cos\Bigl(kt+\frac{b}{\mu}\sin t\Bigr)\,dt,
\end{equation}
where
\begin{equation}\label{eq:psi}
\psi(t)=\biggl(\frac{a_{0,k}(b,\mu)}\mu -k\biggr)t+\frac1\mu\int_0^t\cos x_0(\tau)\,d\tau.
\end{equation}
Denote also $\hat x(t)=kt+(b/\mu)\sin t$. Then the right-hand side in \eqref{eq:bessel-differ} equals
\begin{equation*}
-\frac1{2\pi}\int_0^{2\pi}\cos(\hat x(t))\cdot(\cos\psi(t)-1)\,dt
+\frac1{2\pi}\int_0^{2\pi}\sin(\hat x(t))\cdot\sin\psi(t)\,dt.
\end{equation*}
Denote the summands here as $S_1$ and $S_2$ respectively.

3. Let us start by estimating the norm of $\psi$. The triple $(a_{0,k}(b,\mu),b,\mu)$ satisfies conditions
\eqref{eq:thm-const-cond}, hence we may apply Theorem~\ref{thm:asymp-const} for the first summand in \eqref{eq:psi} and Proposition~\ref{prop:asymp-int-const} for the second one.
Then we obtain
\begin{equation}\label{eq:norm-psi}
\norm{\psi}_{C^0}=O\biggl(\frac{1}{\sqrt{b}}\biggl(\frac{1}{\mu^{1/2}}+\frac{1}{\mu^{3/2}}\biggr)\biggr),
\end{equation}
In order to estimate $S_1$, we bound the first cosine by $1$ and the second multiplier by $\norm{\psi}_{C^0}^2/2$. This yields
\begin{equation*}
\abs{S_1}=O\biggl(\frac{1}{b}\biggl(\frac{1}{\mu}+\frac{1}{\mu^3}\biggr)\biggr).
\end{equation*}

4. The estimation of $S_2$ goes along the lines of proof of Proposition~\ref{prop:asymp-int-const}.
We split $[0,2\pi]$ into subsegments $J_j$ and $J^*$ by the points where $\hat x(t)\equiv0\pmod{2\pi}$,
consider the set $M_\delta$, and classify these subsegments into types 1, 2, or 3 as above.

Recall that $\hat x(t)$ is a solution of the equation \eqref{eq:Joseph_bessel_comb} with $\gamma=0$, and the parameters equal to $\hat a=k\mu$, $\hat b=b$, $\hat\mu=\mu$. As it was said before, we can apply Propositions~\ref{prop:length-bound}, \ref{prop:int-type12}, and \ref{prop:int-type3int} to it.

The integral in $S_2$ splits into the sum of integrals over subintervals $J_j$ and $J^*$. We denote the part of this sum
corresponding to the segments of types 1 and 2 by $S_2^{(1,2)}$ and the part corresponding to the segments of type 3 by $S_2^{(3)}$.
Proposition~\ref{prop:int-type12} applies to $S_2^{(1,2)}$:
\begin{multline*}
\abs{S_2^{(1,2)}}\le \sum_{\substack{J=J_j, J^*\\\text{ of types 1 or 2}}}\left|\frac1{2\pi}\int_J\sin\hat x(t)\cdot\sin\psi(t)\,dt\right|\le
\frac{\norm{\sin\psi}_{C^0}}{2\pi}\sum_{\substack{J=J_j, J^*\\\text{ of types 1 or 2}}}\abs{J}\\
{}\le \biggl(\frac{1}{\sqrt{b}}\biggl(\frac{1}{\mu^{1/2}}+\frac{1}{\mu^{3/2}}\biggr)\biggr)\cdot O\biggl(\frac{\mu}{b\delta}+\delta\biggr)=
O\biggl(\frac{1}{b}\biggl(1+\frac{1}{\mu}\biggl)\biggr).
\end{multline*}

5. The part $S_2^{(3)}$ is estimated as follows. Fix any point $t_j$ in each $I_j$. Then
\begin{align*}
\abs{S_2^{(3)}}\le {}&\sum_{J_j\text{ of type 3}}\left|\frac1{2\pi}\int_{J_j}\sin \hat x(t)\cdot\sin\psi(t_j)\,dt\right|\\
{}+{}&\sum_{J_j\text{ of type 3}}\left|\frac1{2\pi}\int_{J_j}\sin \hat x(t) \cdot\bigl[\sin\psi(t)-\sin\psi(t_j)\bigr]\,dt\right|.
\end{align*}
Denote the two sums on the right-hand side by $S_2^{(3)\text{\raisebox{0.3ex}{$\star$}}}$ and $S_2^{(3)\text{\raisebox{0.3ex}{$\star\star$}}}$, respectively.
The first sum, $S_2^{(3)\text{\raisebox{0.3ex}{$\star$}}}$ is estimated by Proposition~\ref{prop:int-type3int}:
\begin{equation*}
S_2^{(3)\text{\raisebox{0.3ex}{$\star$}}}
\le \norm{\psi}_{C^0}\int_{\mathcal J_3} \biggl[O\biggl(\frac1{b\abs{\cos \hat t}}\biggr)+O\biggl(\frac{\mu}{b\cos^2\hat t}\biggr)\biggr]\,d\hat t.
\end{equation*}
The integral is managed exactly in the same way as the integral over $\mathcal I_3$ in the proof of Proposition~\ref{prop:asymp-int-const};
together with inequality $\ln z\le 2\sqrt{z}$ and \eqref{eq:norm-psi} this yields
\begin{equation*}
S_2^{(3)\text{\raisebox{0.3ex}{$\star$}}}=O\biggl(\frac{1}{b}\biggl(1+\frac{1}{\mu^2}\biggr)\biggr).
\end{equation*}

6. In the sum $S_2^{(3)\text{\raisebox{0.3ex}{$\star\star$}}}$ we bound $\sin\hat x(t)$ by~$1$ and the difference in square brackets by $\osc_{J_j}\psi\le \abs{J_j}\cdot\max_{J_j}\abs{\psi'}\le \abs{J_j}\cdot(\abs{a-k\mu}+1)/\mu$:
\begin{equation*}
S_2^{(3)\text{\raisebox{0.3ex}{$\star\star$}}}\le \sum_{J_j\text{ of type 3}}\abs{J_j}\osc\nolimits_{J_j}\psi\le
\sum_{J_j\text{ of type 3}}\abs{J_j}^2\cdot\biggl(\biggl|\frac{a}{\mu}-k\biggr|+\frac{1}{\mu}\biggr).
\end{equation*}
We have already seen in \eqref{eq:diff-a-kmu} that $\abs{a-k\mu}=O(1)$ hence the last bracket is $O(1/\mu)$.
Proposition~\ref{prop:length-bound} yields
\begin{equation*}
\abs{J_j}^2\le \int_{J_j}O\biggl(\frac{\mu}{b\abs{\cos\hat t}}\biggr)\,d\hat t,
\end{equation*}
therefore by \eqref{eq:int-calcul} we obtain
\begin{equation*}
S_2^{(3)\text{\raisebox{0.3ex}{$\star\star$}}}\le \int_{[0,2\pi]\setminus M_\delta}O\biggl(\frac{d\hat t}{b\abs{\cos\hat t}}\biggr)=
O\biggl(\frac{\ln(b/\mu)}{b}\biggr).
\end{equation*}

Joining together the estimates for $S_1$, $S_2^{(1,2)}$, $S_2^{(3)\text{\raisebox{0.3ex}{$\star$}}}$, and $S_2^{(3)\text{\raisebox{0.3ex}{$\star\star$}}}$, we complete the proof.
\end{proof}

\section{Generalizations}
\label{sec:general}

Let us now discuss some possible generalizations of Theorems \ref{thm:asymp-const} and \ref{thm:asymp-bessel}.
Theorem~\ref{thm:asymp-const} can be straightforwardly generalized to any equation of the form \eqref{eq:general} such that
the graph of the function $g$ transversely crosses the line $\{t=0\}$.
More precisely, the proof given above uses only the following properties of the functions $f$ and $g$:
\begin{enumerate}
\item functions $f$ and $g$ are bounded by $1$;
\item $g$ is Lipschitz with constant $1$;
\item the graph $y=g(t)$ transversely intersects the line $y=0$.
\end{enumerate}
(Recall that also $\int_{0}^{2\pi}f(x)\,dx=0$, $\int_{0}^{2\pi}g(t)\,dt=0$.)

Constants equal to one in these properties can be easily replaced by any other constants by the means of the substitutions
\begin{align*}
(f,g,a,b,\mu)&{}\to (f/D, g/D, a/D, b, \mu/D),\\
(f,g,a,b,\mu)&{}\to (f, g/D, a, bD, \mu)
\end{align*}
with some $D>0$. As for the last condition, it is used in two parts of the proof:
(1) estimates of $\mes M_\delta$ and (2) estimates of the integrals
$\int_{[0,2\pi]\setminus M_\delta} d\hat t/\abs{g(\hat t)}$ and
$\int_{[0,2\pi]\setminus M_\delta} d\hat t/g^2(\hat t)$ in \eqref{eq:int-calcul}. Let us express transversality condition in the following
quantitative way: there exists $\varepsilon_0>0$ and $L>0$ such that for any $\varepsilon\le \varepsilon_0$ we have
\begin{equation*}
\mes M_\varepsilon:=\mes\{t:\abs{g(t)}\le\varepsilon\}\le L\varepsilon.
\end{equation*}
Suppose that $\delta\le \varepsilon_0$ (this is a required modification of condition \eqref{subeq:delta:1}), then
$\mes M_\delta$ is estimated exactly in the same way as in the proof, and for integrals we use the following estimate:
\begin{multline*}
\int_{[0,2\pi]\setminus M_\delta}\frac{d\hat t}{g^2(\hat t)}=
\int_{0}^\infty \mes\biggl\{\hat t\in [0,2\pi]\setminus M_\delta: \frac{1}{g^2(\hat t)}\ge y\biggr\}\, dy\\
{}=\int_{0}^\infty \mes\biggl\{\hat t\in [0,2\pi]: \delta\le g(\hat t)\le \frac{1}{\sqrt{y}}\biggr\}\, dy.
\end{multline*}
The set is empty if $y>1/\delta^2$, otherwise we bound its measure by $\mes M_{1/\sqrt{y}}$, which is estimated via transverality condition:
\begin{equation*}
\int_{0}^{1/\delta^2} \mes M_{1/\sqrt{y}}\, dy\le
\int_{0}^{1/\varepsilon_0^2} 2\pi\, dy+\int_{1/\varepsilon_0^2}^{1/\delta^2} \frac{L}{\sqrt{y}} dy\le
O(1)+O\biggl(\frac{1}{\delta}\biggr).
\end{equation*}
Another integral is bounded similarly, and \eqref{eq:int-calcul} preserves its form. Therefore, we obtain the following generalization of Theorem~\ref{thm:asymp-const}.

\begin{theorem}\label{thm:asymp-const-gen}Fix any positive constants $L_0$, $L_1$, $L_2$, $L_3$. Then
there exist positive constants $C_1,C_2,K_1,K_2$ depending on $L_{0,1,2,3}$ such that the following holds.
Consider any functions $f$ and $g$ with zero averages such that
\begin{enumerate}
\item their continuous norms are bounded: $\norm{f}_{C_0}\le L_1$, $\norm{g}_{C_0}\le L_1$,
\item $g$ is Lipschitz with constant $L_2$: $\abs{g(t_1)-g(t_2)}\le L_2\abs{t_1-t_2}$,
\item for any $\delta<1/L_0$ there is a bound $\mes\{\abs{g(t)}<\delta\}\le L_3\delta$.
\end{enumerate}
Then if the parameters $a,b,\mu$ of the equation \eqref{eq:general} are such that
\begin{equation*}
|a|+1\le C_1\sqrt{b\mu},\qquad
b\ge C_2 \mu
\end{equation*}
we have
\begin{equation*}
\left|\frac a\mu-\rho_{a,b,\mu}\right|\le \frac{K_1}{\sqrt{b\mu}}+\frac{K_2}{b\mu}\ln\biggl(\frac{b}{\mu}\biggr)\le
\frac{K_1}{\sqrt{b\mu}}+\frac{2K_2}{\sqrt{b\mu^3}}.
\end{equation*}
\end{theorem}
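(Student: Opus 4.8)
The plan is to reduce Theorem~\ref{thm:asymp-const-gen} to the already-established Theorem~\ref{thm:asymp-const} by a two-step normalization, and then to verify that every estimate in the proof of the latter survives when the explicit functions $\cos x$ and $\cos t$ are replaced by general $f$ and $g$ satisfying the three listed hypotheses.

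First I would normalize the constants $L_1$ and $L_2$ to one. The substitution $(f,g,a,b,\mu)\to(f/D,g/D,a/D,b,\mu/D)$ leaves equation~\eqref{eq:general}, hence its solutions and rotation number, unchanged while dividing $\norm{f}_{C^0}$ and $\norm{g}_{C^0}$ by $D$; choosing $D=L_1$ makes both norms at most one. The substitution $(f,g,a,b,\mu)\to(f,g/D,a,bD,\mu)$ likewise preserves the equation while rescaling the Lipschitz constant of $g$, so a further choice of $D$ normalizes $L_2$ to one. I would track how the conclusion transforms: both substitutions multiply the product $b\mu$ by a factor depending only on $L_1,L_2$ and shift $\ln(b/\mu)$ by an additive constant, so the right-hand side of the desired inequality changes by a controlled amount and the constants $C_1,C_2,K_1,K_2$ of Theorem~\ref{thm:asymp-const} can be adjusted to absorb it. After normalization the only surviving hypothesis is the transversality bound $\mes M_\delta\le L_3\delta$, which must now play the role that the explicit shape of $\cos t$ played in the original argument.

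Next I would re-examine the chain Lemma~\ref{lem:avg-diff}, Proposition~\ref{prop:length-bound}, Proposition~\ref{prop:int-type12}, Proposition~\ref{prop:int-type3int}, and Proposition~\ref{prop:asymp-int-const}. The averaging Lemma~\ref{lem:avg-diff} makes no reference to $f$ or $g$, so it is unchanged. The length bound of Proposition~\ref{prop:length-bound} used only that $g$ is Lipschitz with constant one and that $\abs f\le 1$ (through $A=\abs a+1$), so after normalization its proof goes through verbatim with $\cos t$ replaced by $g(t)$ and $\abs{\cos\hat t}$ by $\abs{g(\hat t)}$. The same holds for Proposition~\ref{prop:int-type3int}, whose oscillation estimates invoke only boundedness and the Lipschitz property. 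The genuinely function-specific computations are just the bound $\mes M_\delta\le 4\arcsin\delta$ in Proposition~\ref{prop:int-type12} and the evaluation of the integrals $\int_{[0,2\pi]\setminus M_\delta}d\hat t/\abs{g(\hat t)}$ and $\int_{[0,2\pi]\setminus M_\delta}d\hat t/g^2(\hat t)$ appearing in~\eqref{eq:int-calcul}.

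The heart of the argument, and the step I expect to be the main obstacle, is therefore replacing these explicit integrals by estimates that follow from the transversality bound alone. For the measure I would simply use $\mes M_\delta\le L_3\delta=O(\delta)$ once $\delta\le\varepsilon_0=1/L_0$, which is precisely the modified form of~\eqref{subeq:delta:1}. For the singular integrals I would pass to the distribution-function (layer-cake) representation
\begin{equation*}
\int_{[0,2\pi]\setminus M_\delta}\frac{d\hat t}{g^2(\hat t)}=\int_0^\infty\mes\Bigl\{\hat t\in[0,2\pi]:\delta\le\abs{g(\hat t)}\le y^{-1/2}\Bigr\}\,dy,
\end{equation*}
noting that the integrand vanishes for $y>\delta^{-2}$ and is bounded by $\mes M_{1/\sqrt y}\le L_3/\sqrt y$ for $y\ge 1/\varepsilon_0^2$ (where the transversality bound applies), while for smaller $y$ it is at most $2\pi$. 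Splitting the range at $1/\varepsilon_0^2$ yields $O(1)+O(\delta^{-1})$, which upon multiplication by the factor $O(\mu/b)$ in~\eqref{eq:int-calcul} gives $O(\sqrt{\mu/b})$, exactly the order obtained in the cosine case. An analogous one-dimensional computation bounds $\int d\hat t/\abs g$ by $O(\ln(1/\delta))=O(\ln(b/\mu))$. With these two bounds in hand the estimate~\eqref{eq:int-calcul} keeps its form, Proposition~\ref{prop:asymp-int-const} carries over, and the conclusion follows exactly as~\eqref{eq:asymp-const} followed from that proposition. The only real care needed is to confirm that $L_0$ and $L_3$ enter the $O(\,\cdot\,)$'s in a way consistent with declaring the final constants $C_1,C_2,K_1,K_2$ to depend only on $L_0,L_1,L_2,L_3$.
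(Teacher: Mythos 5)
Your proposal is correct and follows essentially the same route as the paper: the same two normalizing substitutions to reduce $L_1,L_2$ to one, the same identification of $\mes M_\delta$ and the two singular integrals in \eqref{eq:int-calcul} as the only function-specific steps, and the same layer-cake estimate split at $1/\varepsilon_0^2$ using the transversality bound. Nothing essential is missing.
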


As for Theorem~\ref{thm:asymp-bessel}, we have seen in Section~\ref{sec:intro} that the reduction to a Riccati equation and
identification of fixed point of $\widetilde P_{a,b,\mu}$ for the Arnold tongue boundaries with $0$ and $\pi$ works only if $f(x)=\cos x$ and
$g(t)$ is even. These conditions cannot be significantly extended (trivial extension is obtained by coordinate change $x'=x+x_0$, $t'=t+t_0$; the conditions take form $f(x')=\cos(x'-x_0)$, $g(t')=g(2t_0-t')$). Under these assumptions and transversality condition discussed above
the following analogue of Theorem~\ref{thm:asymp-bessel} holds. Modifications in its proof are exactly the same as above.

\begin{theorem}\label{thm:asymp-bessel-gen}
Fix any positive constants $L_0$, $L_1$, $L_2$, $L_3$.
Then there exist positive constants $C_1',C_2',K_1',K_2',K_3'$ depending on $L_{0,1,2,3}$ such that the following holds.

Consider any function $g$ with zero average that satisfies conditions 1--3 of Theorem~\ref{thm:asymp-const-gen} and the condition $g(t)=g(-t)$.
Let $a_{0,k}(b,\mu)$ and $a_{\pi,k}(b,\mu)$ be the boundaries of $k$-th Arnold tongue of the equation \eqref{eq:general} with this $g$ and $f(x)=\cos x$. Then if the parameters $b,\mu$ and a number $k \in \mathbb{Z}$ satisfy inequalities
\begin{equation*}
|k\mu|+1\le C_1'\sqrt{b\mu},\qquad
b\ge C_2' \mu
\end{equation*}
the following estimates hold
\begin{equation*}
\begin{aligned}
\left|\frac {a_{0,k}(b)}{\mu}-k+\frac{1}{\mu}\tilde J_k\left(-\frac{b}{\mu}\right)\right|&{}\le \frac{1}{b}\biggl(K_1'+\frac{K_2'}{\mu^3}+K_3'\ln\biggl(\frac{b}{\mu}\biggr)\biggr),\\
\left|\frac {a_{\pi,k}(b)}{\mu}-k-\frac{1}{\mu}\tilde J_k\left(-\frac{b}{\mu}\right)\right|&{}\le \frac{1}{b}\biggl(K_1'+\frac{K_2'}{\mu^3}+K_3'\ln\biggl(\frac{b}{\mu}\biggr)\biggr),
\end{aligned}
\end{equation*}
where
\begin{equation*}
\tilde J_k(-z)=\frac{1}{2\pi}\int_0^{2\pi}\cos(kt+zG(t))\,dt,\qquad G(t)=\int_0^t g(\tau)\,d\tau.
\end{equation*}
\end{theorem}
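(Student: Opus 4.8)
The plan is to repeat the proof of Theorem~\ref{thm:asymp-bessel} almost verbatim, substituting the general antiderivative $G(t)=\int_0^t g(\tau)\,d\tau$ for $\sin t$ throughout and invoking the already-generalized integral estimates in place of the cosine-specific ones. First I would observe that the whole setup of Section~\ref{sec:intro}---the reduction of the Poincar\'e map to a M\"obius transformation, the absence of phase lock for non-integer rotation numbers, and the identification of the fixed point of $\widetilde P_{a,b,\mu}$ on a tongue boundary with either $0$ or $\pi$---uses \emph{only} that $f(x)=\cos x$ and that $g$ is even. Both hypotheses are assumed here, so the boundaries $a_{0,k}(b,\mu)$ and $a_{\pi,k}(b,\mu)$ are well defined, and the solution $x_0(t)$ with $x_0(0)=0$ (respectively $x_0(0)=\pi$) on the boundary $a=a_{0,k}$ (respectively $a=a_{\pi,k}$) still satisfies $x_0(2\pi)-x_0(0)=2\pi k$.

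Next I would integrate \eqref{eq:general}, which now reads
\begin{equation*}
x(t)-x(0)=\frac{at+bG(t)+\int_0^t\cos x(\tau)\,d\tau}{\mu},
\end{equation*}
so that the auxiliary trajectory $\hat x(t)=kt+(b/\mu)G(t)$---the $\gamma=0$ solution with parameters $k\mu,b,\mu$---is exactly the phase appearing in $\tilde J_k(-b/\mu)$. This reproduces the identity $k=a_{0,k}(b,\mu)/\mu+\tfrac1\mu\int_0^{2\pi}\cos x_0(\tau)\,d\tau$ and hence the decomposition of $a_{0,k}(b,\mu)-k\mu+\tilde J_k(-b/\mu)$ into the summands $S_1$ and $S_2$ built from $\psi(t)=\bigl(a_{0,k}/\mu-k\bigr)t+\tfrac1\mu\int_0^t\cos x_0(\tau)\,d\tau$, exactly as in \eqref{eq:bessel-differ}--\eqref{eq:psi}. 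The preliminary bootstrapping argument (item~1 of the proof of Theorem~\ref{thm:asymp-bessel}) carries over once Theorem~\ref{thm:asymp-const} is replaced by its generalization Theorem~\ref{thm:asymp-const-gen}, guaranteeing that each of the triples $(k\mu,b,\mu)$, $(a_{0,k},b,\mu)$, $(a_{\pi,k},b,\mu)$ satisfies the hypotheses under which the integral estimates apply and that $\abs{a_{0,k}-k\mu}=O(1)$.

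The estimates of $S_1$ and $S_2$ then proceed as in items~3--6: the bound on $\norm{\psi}_{C^0}$ follows from Theorem~\ref{thm:asymp-const-gen} and the generalized analogue of Proposition~\ref{prop:asymp-int-const}; the bound on $S_1$ uses only $\abs{\cos\psi-1}\le\norm{\psi}_{C^0}^2/2$ and is insensitive to the form of $g$; and $S_2$ is split over the subsegments on which $\hat x(t)\equiv0\pmod{2\pi}$ and handled by the type-$1,2$ measure bound and the type-$3$ integral bound. The one genuine obstacle is to confirm that Propositions~\ref{prop:length-bound}, \ref{prop:int-type12}, and~\ref{prop:int-type3int}---stated for $g(t)=\cos t$---remain valid for a general $g$ obeying conditions~1--3, now with $M_\delta=\{t:\abs{g(t)}<\delta\}$. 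This is precisely the work already carried out in the generalization of Theorem~\ref{thm:asymp-const}: the Lipschitz hypothesis~2 replaces the unit Lipschitz constant of the cosine in the length bound, while the quantitative transversality hypothesis~3 supplies $\mes M_\delta\le L_3\delta$ and, via the layer-cake computation displayed there, the bounds
\begin{equation*}
\int_{[0,2\pi]\setminus M_\delta}\frac{d\hat t}{\abs{g(\hat t)}}=O\Bigl(\ln\tfrac1\delta\Bigr),\qquad
\int_{[0,2\pi]\setminus M_\delta}\frac{d\hat t}{g^2(\hat t)}=O\Bigl(\tfrac1\delta\Bigr),
\end{equation*}
which are exactly the two integrals governing $S_2^{(3)\star}$ and $S_2^{(3)\star\star}$. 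With these in hand the four summands combine to the asserted bound $\tfrac1b\bigl(K_1'+K_2'/\mu^3+K_3'\ln(b/\mu)\bigr)$, and the argument for $a_{\pi,k}$ differs only in the sign coming from $x_0(0)=\pi$.
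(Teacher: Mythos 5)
Your proposal is correct and follows essentially the same route as the paper, which itself proves this theorem by declaring that "modifications in its proof are exactly the same" as those made to generalize Theorem~\ref{thm:asymp-const}: you correctly identify the only two places where the specific form $g(t)=\cos t$ enters (the Lipschitz bound in Proposition~\ref{prop:length-bound} and the measure/integral estimates over $M_\delta$ and its complement), and you supply the same layer-cake replacements the paper uses. The only further remark is that you spell out more explicitly than the paper does why the fixed-point dichotomy $\{0,\pi\}$ survives (evenness of $g$ plus $f(x)=\cos x$), which is a welcome clarification rather than a deviation.
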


The function $\tilde J_k$ stems from integral representations \eqref{eq:integral} \eqref{eq:dbl-integral}, which now have the form
\begin{align*}
x(t)-x(0)&{}=\frac{at+bG(t)+\int_0^t\cos x(\tau)\,d\tau}{\mu},\\
x(2\pi)-x(0)&{}=\frac{2\pi a}{\mu}+\frac1\mu\int_0^{2\pi}\cos\left(\frac{a\tau+bG(\tau)+\int_0^\tau\cos x(s)\,ds}\mu+x(0)\right)d\tau
\end{align*}
(note that $G(2\pi)=0$ due to \eqref{eq:fg-averages}). The function $\tilde J_k$ also has asymptotic representation similar to the one for $J_k$:
\begin{multline}\label{eq:asymp-gen-bessel}
\tilde J_k(-z)\sim \sum_j \frac1{\sqrt{2\pi z\abs{g'(t_j)}}}\cos\biggl(zG(t_j)+kt_j+\frac{\pi}{4}\operatorname{sgn}(g'(t_j))\biggr)\\
\text{as $z\to+\infty$},
\end{multline}
where the sum is taken over all the zeros $t_j$ of the function $g$ on a circle.

Recall that these zeroes are simple (and hence the denominators in \eqref{eq:asymp-gen-bessel} are nonzero) due to transversality condition 3 of Theorems \ref{thm:asymp-const-gen} and \ref{thm:asymp-bessel-gen}.

\paragraph{Acknowledgements.} We would like to thank V.~Kleptsyn and I.~Schurov for helpful conversations and corrections. We would like also to thank I.~Schurov for providing us with the results of computer simulations used in Figure \ref{fig:graph}.


\begin{thebibliography}{99}
\bibitem{Finn} D.Finn {\it Can a bicycle create a unicycle track?}, The Mathematical Association of America, 2002, pp. 283--292
\bibitem{LT} M.Levi, S.Tabachnikov {\it On bicycle tire tracks geometry, hatchet planimeter, Menzin's conjecture and oscillation of unicycle tracks},  Experimental Mathematics 18(2),pp. 173--186, 2009
\bibitem{W} G.N. Watson \emph{A Treatise on the Theory of Bessel Functions}, Second Edition, Cambridge University Press, 1995
\bibitem{LiUl} K. Likharev and B. T. Ulrich, \emph{Systems With Josephson Contacts}, Moscow University, Moscow, 1978, in Russian
\bibitem{BaPa} Barone, A.; Paterno, G. \emph{Physics and Applications of the Josephson Effect}, John Wiley and Sons, 1982.
\bibitem{HollyJ} Holly, S.; Janus, A.; Shapiro, S. \emph{Effect of Microwaves on Josephson Currents in Superconducting Tunneling}, Rev. Mod. Phys. 36 , 223--225, 1964
\bibitem{IG} J. Guckenheimer, Yu. Ilyashenko \emph{The duck and the devil: canards on the staircase}, Moscow Mathematical Journal, Volume 1, Number 1, pp. 27--47, 2001.
\bibitem{BKTcon} V. M. Buchstaber, O. V. Karpov, S. I. Tertychnyi \emph{Features of the dynamics of a Josephson junction biased by a sinusoidal microwave current,} Journal of Communications Technology and Electronics, 51:6, pp. 713--718, 2006
\bibitem{buch2008} V. M. Buchstaber, O. V. Karpov, S. I. Tertychnyi \emph{Mathematical models of the dynamics of an overdamped Josephson junction}, Russian Math. Surveys, 63:3, pp. 557--559, 2008
\bibitem{Buch} V. M. Buchstaber, O. V. Karpov, S. I. Tertychnyi \emph{Rotation number quantization effect,} Theoret. and Math. Phys., 162:2, 211--221, 2010
\bibitem{buch2012}V. M. Buchstaber, O. V. Karpov, S. I. Tertychnyj \emph{ A system on a torus modelling the dynamics of a Josephson junction, }Russian Math. Surveys, 67:1, pp. 178--180, 2012
\bibitem{IRF} Yu.S.Ilyashenko, D.A.Filimonov, D.A.Ryzhov \emph{Phase-lock effect for equations modeling resistively shunted Josephson junctions and for their perturbations, }
Functional Analysis and Its Applications  45 , no. 3,	192--203, 2011
\bibitem{KRS} Kleptsyn V., Romaskevich O., Schurov I. \emph{Josephson effect
and slow--fast systems.} Nanostructures. Mathematical physics and modeling, 8:1, 2013, pp. 31--46, in Russian
\bibitem{KH} A. Katok and B. Hasselblatt, \emph{Introduction to the Modern Theory of Dynamical Systems}, Cambridge Uni.
Press, 1994
\bibitem{lectures}  Yu. Ilyashenko \emph{Lectures in Dynamical systems}, Summer School-2009, manuscript
\bibitem{Foote} Foote R.L. \emph{Geometry of the Prytz planimeter}, Reports on mathematical physics. Vol. 42, pp. 249--271, 1998.
\bibitem{oneline} Glutsyuk A., Kleptsyn V., Filimonov D., Schurov I. \emph{On the adjacency quantization in the equation modelling the Josephson effect}, to appear in Functional Analysis
\bibitem{BT} V. M. Buchstaber, S. I. Tertychnyi, \emph{Explicit solution family for the equation of the resistively shunted Josephson junction model}, Theoret. and Math. Phys., 176:2, pp. 965--986, 2013
\end{thebibliography}
\end{document}